\newcommand{\lozh}[3]{
    \draw[-] (#1+1, -#2, #3) -- (#1+1, -#2-1, #3) -- ((#1+1, -#2, #3+1) -- (#1-1+1, -#2, #3) -- (#1+1, -#2, #3);
}
\newcommand{\lozr}[3]{
    \draw[-](#1,-#2,#3) -- (#1,-#2-1,#3) -- (#1,-#2-1,#3+1) -- (#1,-#2,#3+1) -- (#1,-#2,#3);
}
\newcommand{\lozl}[3]{
    \draw[-](#1+1,-#2,#3) -- (#1-1+1,-#2,#3) -- (#1-1+1,-#2,#3+1)  -- (#1+1,-#2,#3+1) -- (#1+1,-#2,#3);
}
\newcommand{\hex}[4]{
	\pgfmathtruncatemacro{\a}{#1}
	\pgfmathtruncatemacro{\b}{#2}
	\pgfmathtruncatemacro{\c}{#3}
	\pgfmathtruncatemacro{\t}{#4}

	\pgfmathtruncatemacro{\length}{\a+\c+\t}
	\pgfmathtruncatemacro{\height}{\a+\b+\t}
	\pgfmathtruncatemacro{\longdiag}{\a+\b+\c+\t}
    
    \draw[clip] (0,-\a,0) -- ++(0,-\b-\t,0) -- ++(0,0,\c) -- ++(-\a-\t,0,0) -- ++(0,\b,0) -- ++(0,0,-\c-\t) -- cycle;

    \foreach \i in {0,...,\length}{
        \draw[-] (0,0,\i) -- ++(0,-\height,0);
    }
    
    \foreach \j in {0,...,\height}{
        \draw[-] (0,-\j,0) -- ++(0,0,\length);
    }
    
    \foreach \k in {0,...,\longdiag}{
        \draw[-] (0,0,\k) -- ++(\k,0,0);
    }
}
\newcommand{\hexpic}[5]{
\begin{tikzpicture}[x={(0:1cm)},y={(120:1cm)},z={(240:1cm)},scale={#5}]
    \hex{#1}{#2}{#3}{#4}
\end{tikzpicture}
}
\newcommand{\dentB}[2]{
\draw[fill=black] (0,-#1-#2,0) -- ++(-1,0,0) -- ++(0,0,-1) -- cycle;}
\newcommand{\dentC}[2]{
\draw[fill=black] (0,0,#1+#2) -- ++(1,0,0) -- ++(0,1,0) -- cycle;}
\newcommand{\hexTwoDent}[7]{
	\pgfmathtruncatemacro{\a}{#1}
	\pgfmathtruncatemacro{\b}{#2}
	\pgfmathtruncatemacro{\c}{#3}
	\pgfmathtruncatemacro{\sOne}{#4}
	\pgfmathtruncatemacro{\tOne}{#5}
	\pgfmathtruncatemacro{\sTwo}{#6}
	\pgfmathtruncatemacro{\tTwo}{#7}
	
	\pgfmathtruncatemacro{\t}{\tOne+\tTwo}
	\pgfmathtruncatemacro{\uOne}{\b+\tTwo-\sOne}
	\pgfmathtruncatemacro{\uTwo}{\c+\tOne-\sTwo}

	\pgfmathtruncatemacro{\length}{\a+\c+\t}
	\pgfmathtruncatemacro{\height}{\a+\b+\t}
	\pgfmathtruncatemacro{\longdiag}{\a+\b+\c+\t}
    
    \draw[clip] (0,-\a,0) -- ++(0,-\sOne,0) -- ++(0,0,\tOne) -- ++(\tOne,0,0) -- ++(0,-\uOne,0) -- ++(0,0,\c) -- ++(-\a-\t,0,0) -- ++(0,\b,0) -- ++(0,0,-\uTwo) -- ++(\tTwo,0,0) --++(0,\tTwo,0) -- ++(0,0,-\sTwo) -- cycle;

    \foreach \i in {0,...,\length}{
        \draw[-] (0,0,\i) -- ++(0,-\height,0);
    }
    
    \foreach \j in {0,...,\height}{
        \draw[-] (0,-\j,0) -- ++(0,0,\length);
    }
    
    \foreach \k in {0,...,\longdiag}{
        \draw[-] (0,0,\k) -- ++(\k,0,0);
    }
}
\newcounter{fact}
\newtheorem{theorem}[fact]{Theorem}
\newtheorem{proposition}[fact]{Proposition}
\newtheorem{lemma}[fact]{Lemma}
\newtheorem{corollary}[fact]{Corollary}
\newcommand{\Pl}{P}
\DeclareMathOperator{\M}{M}
\newcommand\thickbar[1]{\accentset{\rule{.4em}{.8pt}}{#1}}
\newcommand{\brac}[1]{#1}
\newcommand{\vdent}[3]{\draw[fill=white](#1,#2,#3) --++(0,0,1) --++(0,-1,0) --++(0,0,-1)-- cycle; }
\newcommand\numberthis{
\addtocounter{equation}{1}\tag{\theequation}
}
\newcommand{\xequiv}[1]{\ \rlap{$\equiv$}{\raisebox{.55em}{\tiny \ $#1$}}\ }
\newcommand{\aequiv}{\xequiv{a}}
\title{Lozenge Tiling Function Ratios for Hexagons with Dents on Two Sides}
\author{Daniel Condon }
\date{\today}
\begin{document}

\maketitle
\thispagestyle{empty}

\begin{abstract}
    We give a formula for the number of lozenge tilings of a hexagon on the triangular lattice with unit triangles removed from arbitrary positions along two non-adjacent, non-opposite sides. Our formula implies that for certain families of such regions, the ratios of their numbers of tilings are given by simple product formulas.
\end{abstract}

\section{Introduction}

The \emph{triangular lattice} is a tiling of the plane by unit equilateral triangles, and a \emph{region} on the triangular lattice is a connected union of finitely many of those triangles\footnote{When we say `triangle' in this paper, we always mean a unit triangle on the lattice.}. We say triangles that share an edge are \emph{adjacent}.
A \emph{lozenge} on the triangular lattice is the union of two adjacent triangles, and a \emph{lozenge tiling} or simply \emph{tiling} of a region is a set of lozenges within that region, which cover the region, and which do not overlap. A region with at least one tiling is called \emph{tileable}.
A hexagonal region is said to be \emph{semiregular} if its opposite sides are the same length. We identify congruent regions and let $H_{a,b,c}$ denote the semiregular hexagon(al region) with opposite sides of lengths $a$, $b$, and $c$.

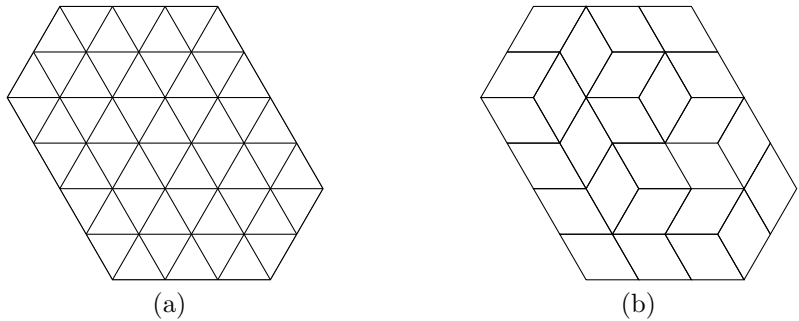
\begin{figure}
\begin{center}
\begin{multicols}{2}
\hexpic3420{.7}

(a)

\vfill \null \columnbreak

\begin{tikzpicture}[x={(300:1cm)},y={(180:1cm)},z={(60:1cm)},scale={.7}]

    \lozr 000
    \lozr 001
    \lozl 011
    \lozh 012
    \lozh 022
    \lozl 010
    \lozh 000 
    \lozh 100
    \lozh 200
    \lozh 300
    \lozh 310
    \lozh 320
    \lozl 330
    \lozl 331
    \lozr 111
    \lozh 122
    \lozl 030
    \lozh 020
    \lozl 110
    \lozr 221
    \lozl 210
    \lozh 120
    \lozr 310
    \lozr 320
    \lozl 140
\end{tikzpicture}

(b)
\end{multicols}
\end{center}
\caption{(a) $H_{3,4,2}$; (b)  a tiling of $H_{3,4,2}$} 
    \label{fig:fig1}
\end{figure}

A formula for the number of lozenge tilings of a semiregular hexagon was first given by MacMahon \cite{Ma} in the context of plane partitions. A bijection between plane partitions and lozenge tilings was later given by David and Tomei \cite{DT89}. Letting $\M(R)$ denote the number of lozenge tilings of a region $R$, MacMahon's formula can be expressed
\begin{equation}\M(H_{a,b,c}) = \prod_{i=1}^c \frac{(a+i)_b}{(i)_b}=: \Pl(a,b,c)\label{eq:MacMahon}\end{equation}
where $(x)_y$ is the pochhammer symbol, $(x)_y = \prod_{i=0}^{y-1}(x+i)$. 
When $R_{\thickbar x}$ denotes a family of regions defined by parameters $\thickbar x$ (as with $\{H_{a,b,c}: a,b,c \in \mathbb N\}$) we say that $\M(R_{\thickbar x})$ denotes the \emph{tiling function} of $R_{\thickbar x}$ with parameters $\thickbar x$. So $\Pl(a,b,c)$ is the tiling function for semiregular hexagons.

Hexagons in the triangular lattice need not be semiregular, but the lengths of opposite sides necessarily differ by the same amount, say $t$. When $t>0$ we say the longer of each opposite pair is a \emph{long} side and the rest are \emph{short} sides. We can therefore define hexagonal regions by the lengths of their short sides $a,b, c$, and the difference $t$, using the notation $H_{a,b,c,t}$. The sides of the hexagon alternate around the perimeter between short and long, so without a loss of generality we will assume that the side lengths of the hexagon appear in the clockwise order $a, b+t, c, a+t, b, c+t$ starting from the north side.  The hexagon in Figure \ref{fig:fig2} (a) is $H_{4,3,2,4}$. Semiregular hexagons have $t=0$, so $H_{a,b,c} = H_{a,b,c,0}.$

The region $H_{4,3,2,4}$ has no lozenge tilings at all. A lozenge covers exactly one up-pointing triangle and one down-pointing triangle, so that a region is tileable if it contains the same number of triangles with each orientation: we call such a region \emph{balanced}. The hexagon $H_{a,b,c,t}$ contains an excess of $t$ triangles with one orientation, so semiregular hexagons are the only hexagons which are balanced. The orientation in excess is that along the {long} sides of the hexagon (by our conventions, these are up-pointing triangles).


In this paper we study regions we call \emph{dented hexagons}, obtained by removing unit triangles along\footnote{\label{note1}When we say a unit triangle is \emph{along} a side of the the hexagon, we mean it shares an edge with the border of that side. When we say we a triangle is removed \emph{from} a side, we mean that the removed triangle is along that side.} two long sides of a hexagon.
We suppose $m$ triangles are removed from the northeast side, and $n$ triangles are removed from the northwest side. For balanced regions, this implies $t=n+m$.  Figure \ref{fig:fig2} (b) gives one example of such a region. The black triangles have been removed from the region; we call these \emph{dents}. The grey areas are part of the dented hexagon but are covered by \emph{forced lozenges}, which are described at the end of Section \ref{background}.

\begin{figure}
\begin{center}
\begin{multicols}{2}

\begin{tikzpicture}[x={(0:1cm)},y={(120:1cm)},z={(240:1cm)},scale=.5]

\draw [decorate,decoration={brace,amplitude=5pt,raise=4pt}]
(0,0,4) -- ++(4,0,0) node [black,midway,yshift=.5cm] {\footnotesize
$a$};

\draw [decorate,decoration={brace,amplitude=5pt,raise=4pt}]
(0,0,9) -- ++(0,0,-5) node [black,midway,yshift=.3cm,xshift=-.6cm] {\footnotesize
$c+t$};

\draw [decorate,decoration={brace,amplitude=5pt,raise=4pt}]
(0,-4,0) -- ++(0,-6,0) node [black,midway,yshift=.3cm,xshift=.6cm] {\footnotesize
$b+t$};

\draw [decorate,decoration={brace,amplitude=5pt,raise=4pt}]
(0,-11,0) -- ++(0,0,2) node [black,midway,yshift=-.3cm,xshift=.4cm] {\footnotesize
$c$};

\draw [decorate,decoration={brace,amplitude=5pt,raise=4pt}]
(0,-3,10) -- ++(0,3,0) node [black,midway,yshift=-.3cm,xshift=-.4cm] {\footnotesize
$b$};

\draw [decorate,decoration={brace,amplitude=5pt,raise=4pt}]
(7,-4,9) -- ++(-8,0,0) node [black,midway,yshift=-.5cm] {\footnotesize
$a+t$};

    \hex{4}{3}{2}{4}
\end{tikzpicture}

(a)

\columnbreak

\vspace*{\fill}

\begin{tikzpicture}[x={(0:1cm)},y={(120:1cm)},z={(240:1cm)},scale=.5]
    
    \draw [decorate,decoration={brace,amplitude=5pt,raise=6pt}]
    (0,-4,0) -- ++(0,-1,0) node [black,midway,yshift=.3cm,xshift=.6cm] {\footnotesize
    $u_1$};
    
    \draw [decorate,decoration={brace,amplitude=5pt,raise=2pt}]
    (0,-4,0) -- ++(0,-4,0) node [black,midway,yshift=.3cm,xshift=.4cm] {\footnotesize
    $u_2$};
    
    \draw [decorate,decoration={brace,amplitude=5pt,raise=6pt}]
    (0,0,7) -- ++(0,0,-3) node [black,midway,yshift=.3cm,xshift=-.5cm] {\footnotesize
    $v_1$};
    
    \draw [decorate,decoration={brace,amplitude=5pt,raise=2pt}]
    (0,0,8) -- ++(0,0,-4) node [black,midway,xshift=-.6cm] {\footnotesize
    $v_2$};
    
    \hex{4}{3}{2}{4}
    \dentB{4}{1}
    \dentB{4}{4}
    \dentC{4}{3}
    \dentC{4}{4}

    \draw[fill=black!20] (0,-4,0) -- ++(-4,0,0) -- ++(0,0,1) -- ++(4,0,0) -- cycle;
    
    \draw (0,-3,1) -- ++(0,0,1);
    \draw (-1,-3,1) -- ++(0,0,1);
    \draw (-2,-3,1) -- ++(0,0,1);
    
    \draw[fill=black!20] (0,0,7) --++(1,0,0) --++(0,-1,0) --++(-1,0,0) -- cycle;

\end{tikzpicture}

\vspace{.75cm}

(b)

\columnbreak
\end{multicols}
\end{center}
\caption{(a) $H_{4,3,2,4}$; (b) $H_{4,3,2,4,(1,4),(3,4)}$ with forced lozenges shaded grey. For this region $\underline{u_1}=5,\underline{u_2}=3$, $\underline{v_1}=\underline{v_2}=2$}
    \label{fig:fig2}
\end{figure}
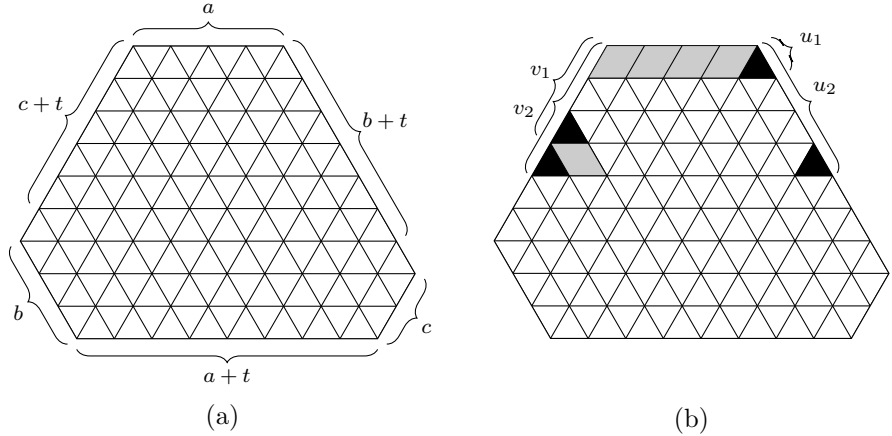

Given non-negative integers $a,b,c,t$ suppose $\vec u = (u_i)_{i=1}^m$  and $\vec v = (v_j)_{j=1}^n$ are vectors of integers with $1 \leq u_i < u_{i+1} \leq b+t$ for $1 \leq i < m$, and $1 \leq v_j < v_{j+1} \leq c+t$ for $1 \leq j < n$, so that $a > 0$ or $u_1 > 1$ or $v_1 > 1$. We use $H_{a,b,c,t,\vec u, \vec v}$ to denote the dented hexagon formed by removing dents from $H_{a,b,c,t}$ at locations indexed by $\vec u$ and $\vec v$. Specifically, we remove each $u_i$th unit triangle from the northeast side of $H_{a,b,c,t}$ and each $v_j$th unit triangle from the northwest side of $H_{a,b,c,t}$. The indexing in both cases starts at the northmost triangle along each side. It is straightforward to check that $H_{a,b,c,t,\vec u, \vec v}$ with parameters as described is well-defined and unique.


Other natural parameters of dented hexagons we will use are $\underline{u_i}:=b+n+i-u_i$ and $\underline{v_j}:=c+m+j-v_j$. The parameter $\underline{u_i}$ counts the number of triangles 
along the northeast side which are below the $i$th dent along that side but have \emph{not} been removed from the region: these are the up-pointing triangles immediately southeast of that dent, as shown in Figure \ref{fig:fig2} (b). The parameter $\underline{v_j}$ analogously counts up-pointing triangles directly southwest of the dent indexed by $v_j$.

The main result of this paper is a remarkably simple product formula for the ratio between the tiling functions of $H_{a,b,c,t,\vec u, \vec v}$ and $H_{0,b,c,t,\vec u, \vec v}$ so long as the latter region is well defined. It follows that the first family of regions has a `nice' tiling function when the latter family does, and we identify some instances of this. 
This is not the first paper to identify families of regions with nice ratios of tiling functions; similar results were recently found by Lai, Ciucu, Rohatgi, and Byun \cite{By19}, \cite{Ci19a}, \cite{Ci19b}, \cite{La19a}, \cite{La19b}, \cite{La19c}.
One specific subfamily of dented hexagons was studied by Lai, who found a lovely tiling function \cite{La17}. A general formula for the tiling function of a hexagonal region with dents in arbitrary positions along its border has been discovered by Ciucu and Fischer \cite{CF16}, but that formula is given only as the determinant of a matrix.

\section{Further Background} \label{background}

A region on the triangular lattice can be identified with a graph, so that each unit triangle corresponds to a vertex and adjacent triangles correspond to adjacent vertices. This graph is bipartite, with its vertex bipartition defined by the orientations of the triangles, since triangles are only adjacent to triangles of the opposite orientation. A lozenge tiling of a region naturally partitions the triangles of that region into adjacent pairs, and so a lozenge tiling can be identified with a perfect matching on the graph of that region. We will therefore borrow two graph theoretic techniques.

The following is a direct application of the Graph Splitting Lemma, which appears as Lemma 3.6 in a 2014 paper by Lai \cite{La14}, and is implicit in earlier work by Ciucu \cite{Ci97}.

\begin{lemma}[Region Splitting Lemma]\thlabel{splitting}
    Let $R$ be a balanced region of the triangular lattice with a partition into regions $P$ and $Q$ such that unit triangles in $P$ which are adjacent to unit triangles in $Q$ are all of the same orientation, and that this orientation is not in excess within $P$.
Then $\M(R) = \M(P)\cdot \M(Q)$, and in particular $\M(R)= 0$ if either $P$ or $Q$ is not balanced.
\qed
\end{lemma}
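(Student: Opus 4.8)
The plan is to translate the statement into the language of perfect matchings and to prove that no perfect matching of $R$ can use an edge crossing between $P$ and $Q$; the product formula then follows immediately. First I would set up notation: identify $R$, $P$, and $Q$ with their associated bipartite graphs, the vertex bipartition being by orientation of the unit triangles, and a tiling with a perfect matching as described in Section \ref{background}. Without loss of generality, assume the common orientation of the boundary triangles of $P$ that are adjacent to $Q$ is up-pointing. Write $p_+$ and $p_-$ for the numbers of up- and down-pointing triangles of $P$, respectively; the hypothesis that this orientation is not in excess within $P$ reads $p_+ \le p_-$. Call an edge of a matching a \emph{cross edge} if it pairs a triangle of $P$ with a triangle of $Q$; by hypothesis every cross edge is incident to an up-pointing triangle of $P$.

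The key step, and the main obstacle, is to show that every perfect matching of $R$ avoids cross edges. The observation driving this is that a down-pointing triangle of $P$ is adjacent only to up-pointing triangles of $P$: it cannot be adjacent to any triangle of $Q$, since the boundary triangles of $P$ adjacent to $Q$ are all up-pointing. Fix a perfect matching $M$ of $R$. Each of the $p_-$ down-pointing triangles of $P$ must be matched by $M$ to a distinct up-pointing triangle lying in $P$, whence $p_- \le p_+$. Combined with $p_+ \le p_-$ this forces $p_+ = p_-$, so that $P$ is balanced and, moreover, \emph{all} $p_+$ up-pointing triangles of $P$ are already consumed in matching the down-pointing triangles of $P$. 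Consequently no up-pointing triangle of $P$ is matched across to $Q$, and since every cross edge must be incident to such a triangle, $M$ uses no cross edge.

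With cross edges excluded, the restriction of any perfect matching of $R$ to the triangles of $P$ and to the triangles of $Q$ uses only internal edges and so yields perfect matchings of $P$ and of $Q$ separately; conversely, any pair of perfect matchings of $P$ and $Q$ glues to a perfect matching of $R$, since $P \cup Q = R$ as sets of triangles. This bijection gives $\M(R) = \M(P)\cdot \M(Q)$. Finally, if either $P$ or $Q$ fails to be balanced it admits no perfect matching, so the corresponding factor is $0$ and $\M(R) = 0$; indeed the counting argument above already shows that tileability of $R$ forces $P$, and hence $Q$, to be balanced.
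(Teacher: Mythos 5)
Your proof is correct, and it is exactly the standard counting argument behind the Graph Splitting Lemma of Lai (Lemma 3.6 of \cite{La14}) and Ciucu \cite{Ci97}, which the paper cites in place of a proof: down-pointing triangles of $P$ can only be matched inside $P$, forcing $p_-\le p_+$, which together with the non-excess hypothesis $p_+\le p_-$ saturates the up-pointing triangles of $P$ and rules out cross edges, after which the product formula is the obvious bijection. Since the paper states the lemma with no written proof, your argument supplies the omitted details faithfully and with no gaps.
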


We will also use Kuo's graphical condensation method. The following is a special case of his Theorem 5.4 from \cite{Ku04}, expressed in the language of this paper. 
\begin{lemma}\thlabel{kuo} Let $R$ be a simply connected region of the triangular lattice, and let $\alpha, \beta, \gamma, \delta$ be unit triangles $R$ of the same orientation which touch the boundary of $R$ at a corner or edge, so that the places they touch the boundary appear in the cyclic order $\alpha, \beta, \gamma, \delta$. Then
\begin{align*}
    & \M(R - \alpha-\beta)\cdot \M(R - \gamma-\delta) \\
   =& \M(R -\alpha-\delta) \cdot \M(R - \beta-\gamma) - \M(R-\alpha-\gamma) \cdot \M(R-\beta-\delta) \numberthis \label{eqn}
\end{align*}
\qed
\end{lemma}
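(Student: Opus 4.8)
The plan is to read \eqref{eqn} as an algebraic (Plücker) identity among the maximal minors of a single matrix. Since the lemma is stated as a special case of Kuo's Theorem~5.4, one legitimate route is simply to specialize that theorem; but to keep things self-contained I would encode matchings as determinants. Translate $R$ into its bipartite adjacency graph $G$ with colour classes $U$ (up-triangles) and $D$ (down-triangles), and put $d=\lvert D\rvert$. Because $\alpha,\beta,\gamma,\delta$ share an orientation, say $U$, the only substantive case is $\lvert U\rvert=d+2$: if the excess of up-triangles is anything other than $2$, or if the four triangles lie in the non-excess class, then each region obtained by deleting two same-orientation triangles is unbalanced, hence untileable, so every term of \eqref{eqn} is $0$ and the identity reads $0=0-0$.

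First I would build a Kasteleyn-signed $(d{+}2)\times d$ matrix $B$ with rows indexed by $U$ and columns by $D$, where $B_{xy}=\pm 1$ exactly when the up-triangle $x$ is adjacent to the down-triangle $y$. Because $R$ is simply connected, $G$ is planar and a consistent Kasteleyn orientation exists, so the number of lozenge tilings of $R$ after deleting two up-triangles $x,y$ equals the absolute value of the maximal minor $p_{xy}$ of $B$ got by striking out rows $x$ and $y$; that is, $\M(R-x-y)=\lvert p_{xy}\rvert$.

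Next I would invoke the three-term Plücker relation. The maximal minors of $B$, indexed by the deleted pair of rows, are the Plücker coordinates of a point of $\mathrm{Gr}(2,d{+}2)$, and they satisfy
\[
    p_{ij}\,p_{kl} - p_{ik}\,p_{jl} + p_{il}\,p_{jk} = 0, \qquad i<j<k<l .
\]
Ordering $\alpha,\beta,\gamma,\delta$ as $i<j<k<l$ and substituting the signed minors for the corresponding coordinates turns this relation, after replacing each $p_{xy}$ by $\pm\,\M(R-x-y)$, into exactly \eqref{eqn}.

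Making the signs cooperate is the crux, and it is where both hypotheses are used. The minus sign in \eqref{eqn} (rather than a plus) must come out of the interplay between the Kasteleyn signs and the orientation of the minors, and the values $\M(R-x-y)=\lvert p_{xy}\rvert$ are all non-negative, so one must check that the cyclic order in which $\alpha,\beta,\gamma,\delta$ meet the boundary of the simply connected region $R$ forces a Kasteleyn orientation for which the six signs $\varepsilon_{xy}$ (with $p_{xy}=\varepsilon_{xy}\M(R-x-y)$) line up to reproduce the stated $+/-$ pattern. I expect this sign bookkeeping to be the main obstacle. If one prefers to avoid Kasteleyn signs entirely, the same identity follows from Kuo's original superposition argument: overlay a tiling of $R-\alpha-\beta$ with one of $R-\gamma-\delta$, decompose the union into doubled lozenges and alternating paths whose four endpoints are $\alpha,\beta,\gamma,\delta$, and switch along paths to biject with the right-hand side; here planarity dictates which endpoint-pairings can occur, the two admissible pairings produce the two products on the right, and the excluded pairing is what is accounted for by the subtraction.
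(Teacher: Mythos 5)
The paper offers no proof of \thref{kuo} at all: it is quoted verbatim (up to notation) as a special case of Kuo's Theorem 5.4 and closed with \qed, so there is no in-paper argument to compare yours against. Judged on its own terms, your strategy is a legitimate and standard one. The reduction to the case where the up-triangles are in excess by exactly $2$ is correct (otherwise every region $R-x-y$ is unbalanced and the identity is $0=0-0$), the identification of $\M(R-x-y)$ with the absolute value of a maximal minor of a Kasteleyn-signed $(d+2)\times d$ adjacency matrix is sound (deleting boundary vertices of a plane graph preserves the Kasteleyn property, which is why all six square submatrices can be handled by one signing), and the three-term Grassmann--Pl\"ucker relation among those minors is the right algebraic engine. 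Your closing sketch of the superposition/path-switching alternative is in fact closer to what Kuo actually does in \cite{Ku04}.

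The gap is the one you name yourself and then defer: the sign analysis is not a technicality to be checked at the end, it is the entire content of the lemma. The Pl\"ucker relation $p_{\alpha\beta}p_{\gamma\delta}-p_{\alpha\gamma}p_{\beta\delta}+p_{\alpha\delta}p_{\beta\gamma}=0$ treats the pairing $\{\alpha\gamma\},\{\beta\delta\}$ as the distinguished one, whereas \eqref{eqn} rearranges to $\M(R-\alpha-\delta)\M(R-\beta-\gamma)=\M(R-\alpha-\beta)\M(R-\gamma-\delta)+\M(R-\alpha-\gamma)\M(R-\beta-\delta)$, which distinguishes the pairing $\{\alpha\delta\},\{\beta\gamma\}$. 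So you must prove that the six Kasteleyn signs $\varepsilon_{xy}$ satisfy $\varepsilon_{\alpha\beta}\varepsilon_{\gamma\delta}=\varepsilon_{\alpha\gamma}\varepsilon_{\beta\delta}=-\varepsilon_{\alpha\delta}\varepsilon_{\beta\gamma}$, and this is exactly where the hypotheses that $R$ is simply connected and that $\alpha,\beta,\gamma,\delta$ meet the boundary in that cyclic order must be consumed; without carrying it out you cannot even tell which of the three products ends up with the minus sign, i.e.\ you have not yet derived \eqref{eqn} rather than one of its (false) permuted variants. The same issue reappears in the superposition route as the claim that the alternating paths can only pair the four boundary defects in the two noncrossing ways; that too needs the planarity and cyclic-order hypotheses made explicit. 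Either completion is routine but must be written down for the proof to stand.
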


The lozenge tilings of a region on the triangular lattice often have a natural interpretation as families of nonintersecting lattice paths with fixed start and end points; this relationship is shown in Figure \ref{fig:lattice}. We will make use of the following classical result originally due to Lindstr{\"o}m \cite{Li73}, and independently to  Gessel and Viennot \cite{Ge89}.
The formulation we use could be considered a restatement of Lemma 14 from \cite{Ci01}.
\begin{proposition}\thlabel{GV}
    Let $S=\{(s_i,t_i) : i \in [n]\}$ and $E=\{(p_j,q_j) : j \in [n]\}$ be sets of coordinates on the square lattice $\mathbb Z^2$, such that every Up-Right lattice path from $(s_i,t_i)$ to $(p_j,q_j)$ intersects any Up-Right lattice path from $(s_j,t_j)$ to $(p_i,q_i)$ for $i \neq j$. The number of families of $n$ non-intersecting Up-Right lattice paths which each start at a point in $S$ and end at a point in $E$ is given by $\det((a_{i,j})_{i,j=1}^n)$, where $a_{i,j} = {p_j+q_j-s_i-t_i \choose q_j-t_i}$ is the number of Up-Right lattice paths from $(s_i,t_i)$ to $(p_j,q_j)$.
\end{proposition}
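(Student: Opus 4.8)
The plan is to recognize this as the special case of the Lindström--Gessel--Viennot lemma and prove it by a determinant expansion combined with a sign-reversing involution. First I would write the determinant as a signed sum over the symmetric group,
\[
\det\big((a_{i,j})_{i,j=1}^n\big) = \sum_{\sigma \in S_n} \mathrm{sgn}(\sigma)\prod_{i=1}^n a_{i,\sigma(i)}.
\]
Since $a_{i,\sigma(i)}$ counts the Up-Right paths from $(s_i,t_i)$ to $(p_{\sigma(i)},q_{\sigma(i)})$, the product $\prod_i a_{i,\sigma(i)}$ counts the families $F=(P_1,\dots,P_n)$ in which $P_i$ runs from source $i$ to sink $\sigma(i)$. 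Thus the right-hand side is a signed count of all pairs $(\sigma, F)$ of a permutation together with a path family realizing it, each weighted by $\mathrm{sgn}(\sigma)$. The aim is to show that after cancellation only the non-intersecting families realizing the identity survive, each with sign $+1$.

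Second, I would construct the cancellation among families containing at least one intersection. Given such a family, I select a crossing canonically: take the least index $i$ for which $P_i$ meets another path, then the least $j$ for which $P_j$ meets $P_i$, and then the first lattice point at which $P_i$ and $P_j$ coincide. Swapping the portions of $P_i$ and $P_j$ beyond that point produces a family $F'$ with the identical multiset of edges but with $P_i, P_j$ now ending at the swapped sinks; hence $F'$ realizes $\sigma \cdot (i\,j)$ and carries the opposite sign. Because the edge set, the distinguished indices $i,j$, and the first coincidence point are all unchanged by the swap, this map is a well-defined involution on intersecting families and is sign-reversing, so all such families cancel in pairs.

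Third, I would identify the survivors, which are exactly the non-intersecting families, and argue that the hypothesis forces each of them to realize the identity. For a transposition this is immediate from the assumption itself, since a path from source $i$ to sink $j$ and a path from source $j$ to sink $i$ are required to intersect, which is impossible in a non-intersecting family. Granting that no non-identity permutation admits a non-intersecting realization, the signed sum collapses to the identity term, in which every family is non-intersecting and carries sign $+1$, yielding precisely $\M$ of the non-intersecting families as claimed.

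The main obstacle is the third step's forcing argument: one must rule out \emph{every} non-identity permutation, not merely transpositions. I would handle this through the geometry of Up-Right paths, by linearly ordering the paths of a non-intersecting family consistently at their sources and at their sinks (two non-crossing Up-Right paths are comparable by which stays weakly above the other, and this comparison is the same at both ends). The pairwise crossing hypothesis is exactly what makes the labelling order agree with this geometric order, so the source-to-sink matching must be order-preserving and hence the identity. Checking that the canonical choice of crossing is genuinely preserved by the tail-swap, so that the involution is an involution, is the other point requiring care, though it follows the classical template.
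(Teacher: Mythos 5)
The paper offers no proof of Proposition \thref{GV} at all: it is quoted as a classical result of Lindstr\"om and of Gessel--Viennot, with the precise formulation attributed to Lemma 14 of \cite{Ci01}. So there is no internal argument to compare yours against; what you have written is the standard sign-reversing ``tail swap'' proof of the Lindstr\"om--Gessel--Viennot lemma, and its architecture (determinant expansion over permutations, cancellation of intersecting families, identification of the survivors) is the right one.

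The two points you yourself flag as needing care are, however, exactly where your sketch does not close. First, your canonical selection of the crossing --- least $i$ whose path meets another, then least $j$ with $P_j$ meeting $P_i$, then their first common point --- is not preserved by the swap: the new $P_i$ inherits the tail of the old $P_j$, which may meet some $P_k$ with $i<k<j$ that the old $P_i$ avoided, so applying the map twice need not return the original family. The standard repair is to let $x$ be the first point \emph{along $P_i$} lying on any other path and then take $j$ minimal with $x\in P_j$; since the total point set of the family, the paths of index less than $i$, and the head of $P_i$ up to $x$ are all unchanged, and up-right paths never revisit a point, that triple $(i,x,j)$ is swap-invariant. Second, and more seriously, the claim that a non-intersecting family can only realize the identity does not follow from the stated pairwise hypothesis by your ``weakly above'' ordering: two disjoint up-right paths need not be comparable in that sense (they can be separated diagonally), and the hypothesis is vacuous for any pair $(i,j)$ for which no $A_j\to B_i$ path exists at all. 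Indeed one can place four sources and four sinks so that the only existing connections form a $4$-cycle, all paths are pairwise disjoint, and every pairwise crossing condition holds vacuously; then every non-intersecting family realizes an odd permutation and the determinant is the negative of the count. So your third step needs either the stronger crossing hypothesis (for $i<j$ and $k<l$, every $A_i\to B_l$ path meets every $A_j\to B_k$ path), or the implicit geometric assumption --- satisfied in the paper's application, where sources and sinks are listed monotonically along the boundary --- under which the order-preservation argument genuinely goes through.
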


When a lozenge is in all possible tilings of a region, we say that lozenge is \emph{forced}.
Often, a region contains a unit triangle which is adjacent to only one other triangle - the lozenge covering those two triangles is forced. One can remove the triangles covered by a forced lozenge to produce a new region, the tilings of which are in a natural bijection with the tilings of the original region; the bijection is defined by removing or replacing the forced lozenge. The new region may itself have a triangle only adjacent to one other triangle, which means it too is covered by a lozenge that is forced in the new region, and therefore in the original region. Removing triangles covered by forced lozenges may thus yield a sequence of regions that all have the same number of tilings. 
Figure \ref{fig:fig2} (b) serves as an example. The forced lozenges are shown and colored grey.

\section{Main Results}

Our first remark about dented hexagons is a characterization of when they have any tilings at all.

\begin{proposition}\thlabel{existence}
    Let a balanced dented hexagon $H$ be given. Let $L_N$ be the Nth horizontal lattice line south of H's northern side\footnote{$L_N$ is then $\frac{\sqrt3}{2}N$ units south of H's northern side.}, and $\mu_N$ be the number of dents north of $L_N$. Then $H$ has a tiling iff $\mu_N \leq N$ for all $N \in \mathbb N$.
\end{proposition}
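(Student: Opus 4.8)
The plan is to translate the arithmetic condition $\mu_N \le N$ into a statement about horizontal cuts and then argue the two directions separately: necessity is a short counting argument, and sufficiency carries the real content. Throughout, write $R_N$ for the set of unit triangles of $H$ lying strictly north of $L_N$, and let $w_N$ denote the number of unit segments of $L_N$ lying inside the underlying hexagon $H_{a,b,c,t}$ (the width of the cut). The key local observation for necessity is orientation-based: a down-pointing triangle has a horizontal top edge and two slanted lower edges, so each of its lattice neighbours lies in its own row or in the row immediately above it, and never in the row below. Hence in any tiling every down-pointing triangle of $R_N$ is matched to an up-pointing triangle of $R_N$, so that
\[
\#\{\text{up triangles of } R_N\}\ \ge\ \#\{\text{down triangles of } R_N\}.
\]

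To turn this into $\mu_N \le N$ I would count both sides. Counting down-triangles by their horizontal top edges and up-triangles by their bottom edges, the difference for the undented hexagon telescopes over rows as $\sum_{k=1}^N (w_k - w_{k-1}) = w_N - w_0 = w_N - a$. Each dent deletes one up-triangle lying north of its row, so the up-count drops by exactly the number of dents north of $L_N$, namely $\mu_N$. The displayed inequality therefore reads $w_N - a - \mu_N \ge 0$. Since the north edge has length $a$ and each of the two slanted upper sides advances the boundary by at most half a unit per row, $w_N \le a + N$, and we conclude $\mu_N \le w_N - a \le N$ for all $N$. This establishes necessity.

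For the converse I first record that $\mu_N \le N$ for all $N$ is equivalent to the cut condition $\mu_N \le w_N - a$ for all $N$: the two bounds coincide on the range $N \le \min(b+t,c+t)$, where $w_N = a + N$, while for larger $N$ one has $\mu_N \le m + n = t \le w_N - a$ automatically, since by then every dent already lies north of $L_N$ and $w_N - a$ does not fall below $t$ until the cut exits the region. So it suffices to prove: if $H$ is balanced and $\#\{\text{up of } R_N\} \ge \#\{\text{down of } R_N\}$ for every $N$, then $H$ is tileable. Here I would pass to non-intersecting lattice paths through \thref{GV}: a tiling of $H$ is encoded by a family of non-intersecting up-right paths whose sources are determined by the north side together with the up-triangles opposite the dents, and whose sinks are determined by the remaining sides. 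The cut inequalities then translate into the assertion that, under the order-forced pairing of sources with sinks, each source is dominated coordinatewise by its paired sink, which for up-right paths with a forced pairing is exactly the condition guaranteeing that a non-intersecting family exists.

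The main obstacle is precisely this sufficiency step: setting up the source and sink coordinates for the dented region and checking that the per-cut inequalities are equivalent to coordinatewise domination of each paired source--sink. An alternative I would keep in reserve, which avoids explicit coordinates, is an induction on the area of $H$ that peels off the top row: the cut condition supplies enough up-triangles in the top row to match all of its down-triangles within that row, and one matches greedily so that the residual region (rows $2$ onward, with the down-triangles consumed by forced vertical lozenges deleted) is again horizontally convex and still satisfies every cut inequality, after which the induction hypothesis or \thref{splitting} applies. Guaranteeing that the residual region preserves every cut inequality is the delicate point of that approach.
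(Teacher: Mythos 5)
Your necessity argument is correct and essentially a hands-on version of what the paper does: the observation that every neighbour of a down-pointing triangle lies in its own row or the row above is exactly why \thref{splitting} applies across $L_N$, and your count $\#\mathrm{up}(R_N)-\#\mathrm{down}(R_N)=w_N-a-\mu_N$ correctly identifies when the region above the cut is unbalanced. That half of the proposition is fine.

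The gap is in sufficiency, and you have flagged it yourself: neither of your two routes is carried out at its decisive step. For the lattice-path route, the assertion that for up-right paths with a forced pairing ``each source dominated coordinatewise by its paired sink'' is \emph{exactly} the condition for a non-intersecting family to exist is not a standard citable fact --- it is false for general source/sink configurations, and even granting it for this particular staircase geometry, proving it (and proving that the per-cut inequalities $\mu_N\le N$ are equivalent to the per-pair domination inequalities, which are indexed by different sets) is essentially the whole content of the proposition. \thref{GV} gives you a determinant, not a positivity criterion. For the reserve route, the claim that one can match the top row greedily so that the residual region still satisfies every cut inequality is precisely the delicate point, and it is left unargued. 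The paper's sufficiency proof is a different and genuinely constructive induction, on the number of dents rather than on area: locate the southmost dent (say it is the $m$th eastern dent), carve out the semiregular hexagon $H_{1,\underline{u_m},c}$ sitting directly beneath it together with the parallelogram of all triangles due west of that hexagon's southwest edge, check that $\mu_{t-1}\le t-1$ forces $u_m\ge t$ and $\underline{u_m}\le b$ so that these two pieces really fit as described, and observe that both pieces are trivially tileable while the remainder is a dented hexagon with one fewer dent still satisfying the hypothesis. To complete your proof you would need either to supply that kind of explicit decomposition or to actually establish the path-domination equivalence you invoke; as written, the ``if'' direction is a plan rather than a proof.
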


\begin{proof}
    Let $\vec u$ and $\vec v$ and a balanced dented hexagon $H=H_{a,b,c,t,\vec u, \vec v}$ be given. 
    
    Suppose for some $N$, $\mu_N > N$; assume that $N$ is minimal such that this is the case. It is straightforward to check the region above $L_N$ is unbalanced. By \thref{splitting}, applied to the regions above and below $L_N$, the dented hexagon has no tilings.
    
    If $\mu_N \leq N$ for each $N$, we can exhibit a partition of the overall region into tileable regions, proving the existence of a tiling for the entire region. We induct on the number of dents.
    
    If the region has no dents, it is a balanced semiregular hexagon and has at least one tiling. Suppose \thref{existence} holds for dented hexagons with fewer than $t$ dents. Suppose WLOG that $u_m \geq v_n$; in other words the southmost dent is on the eastern side of the region. We sketch the semiregular hexagon $H_{1,\underline{u_m},c}$ directly underneath the southmost dent, so that its northern side is the border of that dent. Figure \ref{fig:fig7} (a) depicts this hexagon (dark). 
    We also sketch a parallelogram consisting of all triangles directly west of the southwest edge of the dark hexagon. 
    This parallelogram is also depicted (light) in Figure \ref{fig:fig7} (a). Both the hexagon and parallelogram are tileable, and the rest of the region is tileable by the induction hypothesis; so the entire region is tileable.
    
\begin{figure}
\begin{center}
\begin{tikzpicture}[x={(0:1cm)},y={(120:1cm)},z={(240:1cm)},scale=.45]

    \begin{scope}
    
    \draw[line width=2pt, fill=blue!60] (0,-11,0) -- ++(-1,0,0) -- ++(0,0,3) -- ++(0,-3,0) -- ++(1,0,0) -- ++(0,0,-3) -- cycle;
    
    
    \draw[line width=2pt, fill=blue!10] (-1,-11,3) -- ++(-8,0,0) -- ++(0,-3,0) -- ++(8,0,0) -- cycle;
    
    \hex{5}{5}{3}{4}
    \dentB{5}{5}
    \dentB{5}{6}
    \dentC{5}{2}
    \dentC{5}{5}
    
    
    \end{scope}

    \begin{scope}[xshift=14cm]

    \draw[line width=2pt, fill=blue!60] (0,-11,0) -- ++(-1,0,0) -- ++(0,0,3) -- ++(0,-3,0) -- ++(1,0,0) -- ++(0,0,-3) -- cycle;
    
    \draw[line width=2pt, fill=blue!10] (-1,-11,3) -- ++(-8,0,0) -- ++(0,-3,0) -- ++(8,0,0) -- cycle;
    
    \draw[line width=2pt, fill=red!60] (-8,-11,3) -- ++(-1,0,0) -- ++(0,2,0) -- ++(0,0,-2) -- ++(1,0,0) -- ++(0,-2,0) -- cycle;
    
    \draw[line width=2pt, fill=red!10] (-8,-11,3) -- ++(7,0,0) -- ++(0,0,-2) -- ++(-7,0,0) -- cycle;
    
    \draw[line width=2pt, fill=brown!60] (0,-10,0) -- ++(-1,0,0) -- ++(0,0,2) -- ++(1,0,0) -- cycle;
    
    \draw[line width=2pt, fill=teal!60] (0,0,7) -- ++(1,0,0) -- ++(0,-3,0) -- ++ (0,0,02) -- ++(-1,0,0) -- ++(0,3,0) -- cycle;
    
    \draw[line width=2pt, fill=teal!10] (0,-4,6) -- ++(5,0,0) -- ++(0,0,2) -- ++(-5,0,0) -- cycle;
    
    \hex{5}{5}{3}{4}
    \dentB{5}{5}
    \dentB{5}{6}
    \dentC{5}{2}
    \dentC{5}{5}
    \end{scope}
\end{tikzpicture}

\begin{multicols}{2}

(a)

(b)
\end{multicols}
\end{center}
\caption{$H_{5,5,3,4,(5,6),(2,5)}$(a) partitioned into three tileable regions; (b) partitioned into many regions.} 
    \label{fig:fig7}
\end{figure}
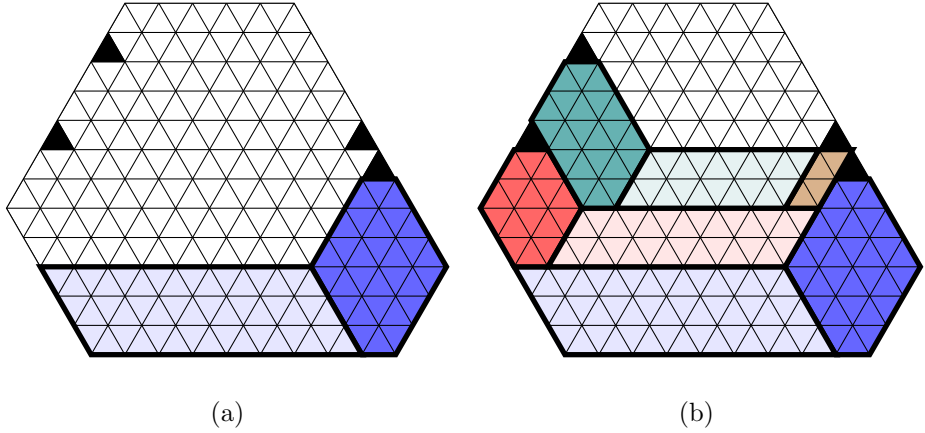

    The fact that $\mu_{t-1} \leq t-1$ implies that $u_m \geq t$, and $\underline{u_m} \leq b$. This guarantees the northeast and southwest sides of the dark hexagon are of length $\leq b$, so that the light region west of the dark hexagon is actually a parallelogram. Had it been the case that $v_n > u_m$, we would simply flip the picture.
\end{proof}

Our application of the inductive hypothesis in the proof implies that one could continue subdividing the entire region into parallelograms and semiregular hexagons. Such a subdivision is depicted in Figure \ref{fig:fig7} (b). \thref{existence} shows that untileable dented hexagons are characterized by having too many dents too far north, which yields the following consequence.

\begin{corollary}\thlabel{existcor}
    Let $H=H_{a,b,c,n+m,(u_i)_1^m, (v_j)_1^n}$ and $H'=H_{a',b',c',n+m,(u'_i)_1^m, (v'_j)_1^n}$ be dented hexagons with $u_i \leq u_i'$ and $v_j \leq v_j'$ for each $i, j$. If $H$ has a tiling, so does $H'$. \qed
\end{corollary}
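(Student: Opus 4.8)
The plan is to deduce the corollary directly from \thref{existence}, which characterizes tileability purely in terms of the quantities $\mu_N$, without reference to $a$, $b$, or $c$. Writing $\mu_N(H)$ and $\mu_N(H')$ for the number of dents north of $L_N$ in $H$ and $H'$ respectively, it suffices to prove that $\mu_N(H') \le \mu_N(H)$ for every $N \in \mathbb N$. For then, assuming $H$ has a tiling, \thref{existence} gives $\mu_N(H) \le N$ for all $N$, whence $\mu_N(H') \le \mu_N(H) \le N$ for all $N$, and \thref{existence} applied in the reverse direction yields a tiling of $H'$.

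The one step requiring care is to express $\mu_N$ in terms of the dent indices $\vec u$ and $\vec v$. First I would verify the geometric fact that the $u_i$th triangle along the northeast side (indexed from the north) lies in the $u_i$th horizontal strip below the northern side, i.e. between $L_{u_i-1}$ and $L_{u_i}$, and likewise that the $v_j$th triangle along the northwest side lies between $L_{v_j-1}$ and $L_{v_j}$. This holds because both the northeast and northwest sides emanate from the two endpoints of the horizontal northern side, which lie on $L_0$, and each unit step along either side drops by exactly one lattice strip (the vertical drop per unit edge is $\tfrac{\sqrt3}{2}$, the spacing between consecutive lines). Crucially, this correspondence between a dent's index and its depth depends only on which side the dent lies on, and not on $a$, $b$, or $c$; hence it is identical for $H$ and $H'$. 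Consequently a dent indexed $u_i$ (resp. $v_j$) is north of $L_N$ exactly when $u_i \le N$ (resp. $v_j \le N$), so that
\[
\mu_N(H) = \#\{\, i : u_i \le N \,\} + \#\{\, j : v_j \le N \,\},
\]
together with the analogous identity for $\mu_N(H')$ obtained by replacing $u_i, v_j$ with $u_i', v_j'$.

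Finally I would invoke the hypotheses $u_i \le u_i'$ and $v_j \le v_j'$: since $u_i' \le N$ forces $u_i \le N$, we get $\#\{\, i : u_i' \le N \,\} \le \#\{\, i : u_i \le N \,\}$, and similarly for the $v$'s; summing the two inequalities gives $\mu_N(H') \le \mu_N(H)$ for every $N$, as required. The substance of the argument is thus entirely in the bookkeeping of the middle step — identifying depth with index uniformly across both regions — after which the conclusion is an immediate monotonicity observation. This also explains why the corollary imposes no relation between $(a,b,c)$ and $(a',b',c')$: by \thref{existence} tileability is insensitive to those parameters and depends only on the depths of the dents, which the hypotheses only push southward.
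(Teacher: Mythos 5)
Your proposal is correct and matches the paper's (implicit) argument: the corollary is stated with no written proof precisely because it follows immediately from \thref{existence} via the observation that increasing the dent indices can only decrease each $\mu_N$. Your careful verification that a dent indexed $u_i$ (or $v_j$) lies between $L_{u_i-1}$ and $L_{u_i}$, so that $\mu_N = \#\{i : u_i \le N\} + \#\{j : v_j \le N\}$, is exactly the bookkeeping the paper leaves to the reader.
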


Our main result is that families of dented hexagons with fixed parameters, $b,c, t, \vec u, \vec v$ have a tiling function given by the following rational function in $a$ (in fact, a polynomial in $a$):

\begin{theorem}\thlabel{main}
 Where $H_{a,b,c,t,(u_i)_1^m, (v_j)_1^n}$ is a tileable dented hexagon (implying $t=n+m$),
 \begin{align*}
     {\M(H_{a,b,c,t,\vec u, \vec v})} &= {\M(H_{0,b,c,t,\vec u, \vec v})} \prod_{i=1}^m(u_i)_{\underline{u_i}} \prod_{j=1}^n(v_j)_{\underline{v_j}}\\
     & \times \frac{\Pl(a,b+n, c+m)}{\prod_{i=1}^m(a+u_i)_{\underline{u_i}}\prod_{j=1}^n(a+v_j)_{\underline{v_j}}}.
 \end{align*}
\end{theorem}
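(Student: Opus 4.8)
The plan is to translate tilings into non-intersecting lattice paths in order to expose the role of $a$, and then to run an induction on $a$ powered by Kuo condensation. First I would delete the forced lozenges of $H_{a,b,c,t,\vec u,\vec v}$ (the grey lozenges in Figure~\ref{fig:fig2}(b)), which leaves the tiling count unchanged, and apply the standard bijection between lozenge tilings and families of non-intersecting Up--Right lattice paths (Figure~\ref{fig:lattice}). In this picture the $m$ dents on the northeast side and the $n$ dents on the northwest side determine the starting points of the paths, so that $\vec u,\vec v$ and hence the derived quantities $\underline{u_i},\underline{v_j}$ are encoded there, while the southern sides determine the ending points; the north-side length $a$ should enter only as a uniform translation separating the two point sets. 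By \thref{GV} this yields $\M(H_{a,b,c,t,\vec u,\vec v})=\det M(a)$ for a square matrix $M(a)$ whose entries are single binomial coefficients, so that $\M(H_{a,\dots})$ is a polynomial in $a$ and $\M(H_{0,\dots})=\det M(0)$. This already explains the shape of the theorem: as Ciucu--Fischer show, neither determinant has a product form in general, so only their \emph{ratio}, i.e.\ the dependence on $a$, can be expected to factor.

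I would then prove the identity by induction on $a$, with the statement quantified over all admissible $b,c,\vec u,\vec v$. The base case $a=0$ is the tautology $\M(H_{0,\dots})=\M(H_{0,\dots})$: since $\Pl(0,b+n,c+m)=1$ and $\prod_{i}(0+u_i)_{\underline{u_i}}\prod_{j}(0+v_j)_{\underline{v_j}}=\prod_{i}(u_i)_{\underline{u_i}}\prod_{j}(v_j)_{\underline{v_j}}$, every factor on the right cancels. For the inductive step I would apply \thref{kuo} to a region built from $H_{a,b,c,t,\vec u,\vec v}$, placing the four unit triangles $\alpha,\beta,\gamma,\delta$ near the corners so that the different pairings, once the lozenges they force are stripped (cf.\ the end of Section~\ref{background}), reduce to dented hexagons of \emph{different} sizes: one of the six regions in \eqref{eqn} reduces to the target $H_{a,b,c,t,\vec u,\vec v}$ itself, while the remaining five reduce to dented hexagons whose first parameter is $a-1$ (with $b,c,\vec u,\vec v$ shifted by bounded amounts), hence are covered by the inductive hypothesis. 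Here \thref{splitting} disposes of any region that detaches a semiregular hexagon (evaluated by \eqref{eq:MacMahon}), and \thref{existence} certifies that the region dividing through in \eqref{eqn} is tileable, so the division is legitimate. Substituting the product formula into the five known terms turns \eqref{eqn} into a linear equation for the target, whose solution I must check agrees with the claimed formula; writing $D(a):=\prod_{i=1}^{m}(a+u_i)_{\underline{u_i}}\prod_{j=1}^{n}(a+v_j)_{\underline{v_j}}$, this amounts to verifying the one-step recurrence
\[
\frac{\M(H_{a,b,c,t,\vec u,\vec v})}{\M(H_{a-1,b,c,t,\vec u,\vec v})}=\frac{\Pl(a,b+n,c+m)}{\Pl(a-1,b+n,c+m)}\cdot\frac{D(a-1)}{D(a)}
\]
together with its analogues for the shifted parameters. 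These are elementary Pochhammer identities; the key ingredient is
\[
\frac{\Pl(a,b+n,c+m)}{\Pl(a-1,b+n,c+m)}=\prod_{k=1}^{c+m}\frac{a+b+n+k-1}{a+k-1},
\]
combined with $(a-1+u_i)_{\underline{u_i}}/(a+u_i)_{\underline{u_i}}=(a-1+u_i)/(a+b+n+i-1)$, using $u_i+\underline{u_i}-1=b+n+i-1$.

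The main obstacle is the placement in the inductive step: one must choose $\alpha,\beta,\gamma,\delta$ so that \emph{all six} condensation regions reduce to admissible dented hexagons with the correctly shifted parameters, track how each shift changes $\underline{u_i}=b+n+i-u_i$ and $\underline{v_j}=c+m+j-v_j$, and guarantee (via \thref{existence}) that no region is accidentally untileable. Once the geometry is pinned down, the collapse of \eqref{eqn} to the Pochhammer recurrence above is routine but lengthy. I expect the cleanest route is to settle the single-dent case $t=1$ by hand first, where the path endpoints and the correct choice of the four triangles are easiest to see, and then to reuse exactly the same placement, translated along the two dented sides, in the general step.
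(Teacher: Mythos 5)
Your toolkit (lattice paths to establish polynomiality in $a$, then Kuo condensation driving an induction) matches the paper's, but your choice of induction variable --- $a$ itself --- leads to an inductive step that cannot be set up the way you describe. In \thref{kuo}, the six regions pair off into the three products $\M(R-\alpha-\beta)\M(R-\gamma-\delta)$, $\M(R-\alpha-\delta)\M(R-\beta-\gamma)$, $\M(R-\alpha-\gamma)\M(R-\beta-\delta)$, and each product accounts for all four deleted triangles exactly once. Since the forced-lozenge cascades emanating from distinct boundary triangles are disjoint, the change in the parameter $a$ caused by deleting a pair is the sum of the changes caused by deleting each triangle separately, so the \emph{total} $a$-shift is the same across all three products. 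Your scheme puts one region at shift $0$ (the target) and the other five at shift $-1$: then the product containing the target totals $-1$ while the product opposite it totals $-2$, a contradiction. So no placement of $\alpha,\beta,\gamma,\delta$ realizes ``one region at $a$, five at $a-1$.'' The consistent patterns look like $\{a,a-2\}$ against two copies of $\{a-1,a-1\}$, which is a three-term relation requiring a second, nontrivial base case at $a=1$; your only base case, $a=0$, is the tautology $\M(H_{0,\dots})=\M(H_{0,\dots})$ and carries no information, so all the content of \thref{main} would have to come from a step you have not constructed. There is also a geometric problem: deleting boundary triangles and stripping forced lozenges cannot shrink the north side while leaving $b,c,\vec u,\vec v$ untouched, so the clean two-term recurrence in $a$ alone that you write down (with the same $D$ on both sides) is not what condensation would deliver even under a correct placement; the known factors would carry shifted dent vectors, and the verification would be a longer Pochhammer computation of the kind the paper does in equations \eqref{eq:sub1}--\eqref{eq:sub2} and \eqref{eq:sub3}--\eqref{eq:sub4}.

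The paper sidesteps all of this by inducting on a different quantity: first the one-sided case \thref{CLP} by induction on $c+n$, then \thref{main} by induction on the number of dents $m+n$. In both condensations the target is the unique region extremal in the induction parameter, while $a$ is allowed to drift \emph{upward} (to $a+1$) in some of the other regions; since the statement being propagated is a proportionality of polynomials in $a$ (the relation $\aequiv$), the $a$-dependence is handled for all $a$ simultaneously, and the base cases (no dents, or $c=0$) are genuinely known via MacMahon's formula or a frozen region. If you want to salvage your plan, keep your first paragraph but re-aim the induction at the number of dents and let the condensation trade dents for side lengths as in Figures \ref{fig:fig8} and \ref{fig:fig5}.
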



When $H_{0,b,c,t,\vec u, \vec v}$ has a nice tiling function, \thref{main} shows that $H_{a,b,c,t, \vec u, \vec v}$ also has a nice tiling function. In particular, this explains a result found by Lai (Theorem 3.1 from \cite{La17}, with $q=1$), that when the dents along each side of the region are adjacent and ${v_n} = {u_m}$ the tiling function for the region is relatively simple. We discuss this and related results in section \ref{corollaries}.

\section{Groundwork} \label{groundwork}

In this section we lay the technical foundation for the arguments that follow.

\begin{lemma}
    If $H_{0,b,c,t,\vec u, \vec v}$ is a tileable dented hexagon, then $\M(H_{a,b,c,t,\vec u, \vec v})$ is a polynomial in $a$ when fixing $b,c,t$ and the vectors $\vec u$ and $\vec v$.
\end{lemma}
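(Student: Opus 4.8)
The plan is to exhibit $\M(H_{a,b,c,t,\vec u, \vec v})$ as a determinant whose entries are polynomials in $a$, and then argue that the determinant is itself a polynomial in $a$. First I would translate tilings of the dented hexagon into families of nonintersecting lattice paths via the standard correspondence referenced in \thref{GV}. After removing the forced lozenges (as described at the end of Section \ref{background}), the remaining region carries a family of nonintersecting Up-Right paths with a fixed set of start points $S$ and a fixed set of end points $E$. The key observation is that changing $a$ while holding $b,c,t,\vec u, \vec v$ fixed only translates certain start or end points along a lattice direction by a distance linear in $a$; the combinatorial structure (which points are sources and which are sinks, and their cyclic/interlacing order) is unchanged. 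Consequently \thref{GV} applies uniformly for all $a$ in the relevant range, and gives $\M(H_{a,b,c,t,\vec u, \vec v}) = \det\big((a_{i,j})_{i,j=1}^k\big)$ where each binomial entry $a_{i,j} = \binom{p_j+q_j-s_i-t_i}{q_j-t_i}$ depends on $a$ only through a linear shift in one of the coordinates.

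The heart of the argument is then the following: a binomial coefficient $\binom{x}{r}$ with $r$ a fixed nonnegative integer is a polynomial in $x$ of degree $r$. Since each coordinate of a start or end point is an affine function of $a$ and the ``lower'' index $q_j - t_i$ of each binomial is a fixed integer independent of $a$ (it is determined by $b,c,t,\vec u,\vec v$ alone), every entry $a_{i,j}$ is a polynomial in $a$. A determinant is a polynomial function (indeed a signed sum of products) of its entries, so $\M(H_{a,b,c,t,\vec u, \vec v}) = \det((a_{i,j}))$ is a polynomial in $a$. This is exactly what is claimed.

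The one point that requires care — and which I expect to be the main obstacle — is justifying that the binomial formula $a_{i,j} = \binom{p_j+q_j-s_i-t_i}{q_j-t_i}$ continues to count Up-Right paths, hence to give the correct entry, uniformly as $a$ varies, including for small or edge-case values of $a$. When $a$ is large the geometric picture is unambiguous, but for $a=0$ or near the boundary of the tileable range one must check that the interlacing hypothesis of \thref{GV} (every path from $(s_i,t_i)$ to $(p_j,q_j)$ meets every path from $(s_j,t_j)$ to $(p_i,q_i)$ for $i \neq j$) still holds, so that the same determinant formula is valid and no entry needs to be reinterpreted. I would resolve this by fixing, once and for all, the labeling of sources and sinks coming from the positions of the dents $\vec u, \vec v$ (which do not move with $a$), verifying the ordering condition holds for all admissible $a$, and noting that the binomial $\binom{x}{r}$ is defined as a polynomial for all integer $x$ and agrees with the path count whenever that count is combinatorially meaningful. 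Because the resulting determinant is a single polynomial expression agreeing with $\M$ at all integer values of $a$ in the tileable range, and both sides are determined by their values there, the identity $\M(H_{a,b,c,t,\vec u, \vec v}) = \det((a_{i,j}))$ establishes polynomiality.
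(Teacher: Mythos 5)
Your proposal is correct and follows essentially the same route as the paper: both convert tilings to families of nonintersecting Up-Right lattice paths, apply the Lindstr\"om--Gessel--Viennot determinant of \thref{GV}, and observe that each binomial entry depends on $a$ only through its upper index (the lower index being fixed by $b,c,t,\vec u,\vec v$), so every entry and hence the determinant is a polynomial in $a$. Your extra care about the interlacing hypothesis holding uniformly in $a$ is a point the paper dispatches with a one-line remark, but it does not change the argument.
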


\begin{proof}
    The tilings of $H_{0,b,c,t,(u_i)_1^m, (v_j)_1^n}$ are in bijection with families of $(b+n)$ non-intersecting lattice paths which start along the southwest boundaries of dented hexagon and end at the northeast boundaries. This bijection is explained briefly in Figure \ref{fig:lattice}. Using this type of bijection is a standard technique when enumerating lozenge tilings, and similar examples of the technique explained in greater detail can be found in \cite{Ci01}.

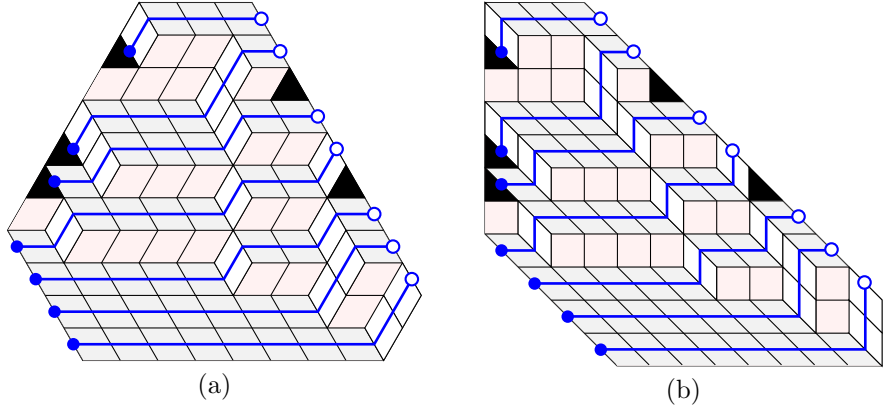
\begin{figure}
\begin{minipage}[c]{\textwidth}
\begin{center}
\begin{multicols}{2}

\begin{tikzpicture}[x={(0:1cm)},y={(120:1cm)},z={(240:1cm)},scale=.5]
    
    \draw[] (0,0,3) --++(0,0,7) --++(0,-4,0) --++(8,0,0) --++(0,0,-2) -- ++(0,9,0) -- cycle;
    
    \begin{scope}
        \clip (0,0,3) --++(0,0,7) --++(0,-4,0) --++(8,0,0) --++(0,0,-2) -- ++(0,9,0) -- cycle;
        \fill[red!5] (0,0,3) --++(0,0,7) --++(0,-4,0) --++(8,0,0) --++(0,0,-2) -- ++(0,9,0) -- cycle;
        
            \foreach \x in {0,1,2,3}{\draw(\x,0,10+\x) --++(10,0,0);}
            \foreach \x in {0,1,2,3,4,5,6}{\draw(0,0,3+\x) --++(3+\x,0,0);}
            \foreach \x in{1,2,3,4,5,6,7,8,9,10}{
            \draw (\x,0,3) --++(0,0,10);
            }
    \end{scope}

        
    \begin{scope}
     \clip(0,0,3) --++ (0,-1,0) --++(2,0,0) --++ (0,0,2) --++(-3,0,0) --++(0,-2,0) --++(3,0,0) --++(0,0,-1)--++(2,0,0) --++(0,0,1) --++(-2,0,0) --++(0,0,1) --++(-3,0,0) --++(0,1,0) --++(-1,0,0) --++(0,-2,0) --++(4,0,0) --++(0,0,-1) --++(2,0,0) --++(0,0,1) --++(-2,0,0)--++(0,-1,0)--++(4,0,0) --++(0,0,2) --++(-1,0,0) --++(0,1,0) --++(-2,0,0) --++(0,1,0)--++(-4,0,0)--++(0,1,0)--++(-1,0,0)--++(0,-4,0)--++(8,0,0)--++(0,0,-2) --++ (0,6,0) --++(-2,0,0)--++(0,0,-1)--++(1,0,0)--++(0,2,0)--++(-3,0,0);
    \fill[black!5](0,0,3) --++ (0,-1,0) --++(2,0,0) --++ (0,0,2) --++(-3,0,0) --++(0,-2,0) --++(3,0,0) --++(0,0,-1)--++(2,0,0) --++(0,0,1) --++(-2,0,0) --++(0,0,1) --++(-3,0,0) --++(0,1,0) --++(-1,0,0) --++(0,-2,0) --++(4,0,0) --++(0,0,-1) --++(2,0,0) --++(0,0,1) --++(-2,0,0)--++(0,-1,0)--++(4,0,0) --++(0,0,2) --++(-1,0,0) --++(0,1,0) --++(-2,0,0) --++(0,1,0)--++(-4,0,0)--++(0,1,0)--++(-1,0,0)--++(0,-4,0)--++(8,0,0)--++(0,0,-2) --++ (0,6,0) --++(-2,0,0)--++(0,0,-1)--++(1,0,0)--++(0,2,0)--++(-3,0,0);
    
    \foreach \x in {0,1,2,3}{\draw(\x,0,10+\x) --++(10,0,0);}
    \foreach \x in {0,1,2,3,4,5,6}{\draw(0,0,3+\x) --++(3+\x,0,0);}
    \foreach \x in {0,...,13}{
    \draw(-10+\x,-3,0) --++ (0,-11,0);
    }
    \end{scope}
    
    \vdent 003 \vdent006 \vdent 107 \vdent 109 \vdent 304
    \vdent 305 \vdent 406 \vdent 707 \vdent 508 \vdent 809
    \vdent 60{10} \vdent 90{10} \vdent 90{11} \vdent {11}0{11}  \vdent {11}0{12}
    
    \draw[line width=1pt, blue] (0,-.5,4) -- ++(0,0,-1) -- ++(3,0,0);
    
    \draw[line width=1pt, blue] (0,-.5,7) -- ++(0,0,-1) -- ++(3,0,0) -- ++(0,0,-2) -- ++(1,0,0);
    
    \draw[line width=1pt, blue] (0,-.5,8) --++(1,0,0)-- ++(0,0,-1) -- ++(3,0,0) -- ++(0,0,-1) -- ++(2,0,0);
    
    \draw[line width=1pt, blue] (0,-.5,10) --++(1,0,0)-- ++(0,0,-1) -- ++(4,0,0) -- ++(0,0,-1) -- ++(2,0,0) -- ++(0,0,-1);
    
    \draw[line width=1pt, blue] (0,-1.5,10) --++(5,0,0)-- ++(0,0,-1) -- ++(2,0,0) -- ++(0,0,-1) -- ++(1,0,0);
    
    \draw[line width=1pt, blue] (0,-2.5,10) --++(7,0,0)-- ++(0,0,-2) -- ++(1,0,0);
    
    \draw[line width=1pt, blue] (0,-3.5,10) --++(8,0,0)-- ++(0,0,-2);
    
    \dentB{3}{3}
    \dentB{3}{6}
    \dentC{3}{2}
    \dentC{3}{5}
    \dentC{3}{6}
    
    \draw   node[circle,fill=blue,scale=.5] at (0,-.5,4) {}
            node[circle,fill=blue,scale=.5] at (0,-.5,7) {}
            node[circle,fill=blue,scale=.5] at (0,-.5,8) {}
            node[circle,fill=blue,scale=.5] at (0,-.5,10) {}
            node[circle,fill=blue,scale=.5] at (1,-.5,11) {}
            node[circle,fill=blue,scale=.5] at (2,-.5,12) {}
            node[circle,fill=blue,scale=.5] at (3,-.5,13) {};
            
    \draw   node[circle,thick,draw,blue,fill=white,scale=.5] at (0,-3.5,0){}
            node[circle,thick,draw,blue,fill=white,scale=.5] at (0,-4.5,0){}
            node[circle,thick,draw,blue,fill=white,scale=.5] at (0,-6.5,0){}
            node[circle,thick,draw,blue,fill=white,scale=.5] at (0,-7.5,0){}
            node[circle,thick,draw,blue,fill=white,scale=.5] at (0,-9.5,0){}
            node[circle,thick,draw,blue,fill=white,scale=.5] at (0,-10.5,0){}
            node[circle,thick,draw,blue,fill=white,scale=.5] at (0,-11.5,0){};

\end{tikzpicture}

(a)

\vfill \null \columnbreak

\begin{tikzpicture}[x={(0:1cm)},y={(135:1.41cm)},z={(270:1cm)},scale=.44]
    
    \draw[] (0,0,3) --++(0,0,7) --++(0,-4,0) --++(8,0,0) --++(0,0,-2) -- ++(0,9,0) -- cycle;
    
    \begin{scope}
        \clip (0,0,3) --++(0,0,7) --++(0,-4,0) --++(8,0,0) --++(0,0,-2) -- ++(0,9,0) -- cycle;
        \fill[red!5] (0,0,3) --++(0,0,7) --++(0,-4,0) --++(8,0,0) --++(0,0,-2) -- ++(0,9,0) -- cycle;
        
            \foreach \x in {0,1,2,3}{\draw(\x,0,10+\x) --++(10,0,0);}
            \foreach \x in {0,1,2,3,4,5,6}{\draw(0,0,3+\x) --++(3+\x,0,0);}
            \foreach \x in{1,2,3,4,5,6,7,8,9,10}{
            \draw (\x,0,3) --++(0,0,10);
            }
    \end{scope}

        
    \begin{scope}
     \clip(0,0,3) --++ (0,-1,0) --++(2,0,0) --++ (0,0,2) --++(-3,0,0) --++(0,-2,0) --++(3,0,0) --++(0,0,-1)--++(2,0,0) --++(0,0,1) --++(-2,0,0) --++(0,0,1) --++(-3,0,0) --++(0,1,0) --++(-1,0,0) --++(0,-2,0) --++(4,0,0) --++(0,0,-1) --++(2,0,0) --++(0,0,1) --++(-2,0,0)--++(0,-1,0)--++(4,0,0) --++(0,0,2) --++(-1,0,0) --++(0,1,0) --++(-2,0,0) --++(0,1,0)--++(-4,0,0)--++(0,1,0)--++(-1,0,0)--++(0,-4,0)--++(8,0,0)--++(0,0,-2) --++ (0,6,0) --++(-2,0,0)--++(0,0,-1)--++(1,0,0)--++(0,2,0)--++(-3,0,0);
    \fill[black!5](0,0,3) --++ (0,-1,0) --++(2,0,0) --++ (0,0,2) --++(-3,0,0) --++(0,-2,0) --++(3,0,0) --++(0,0,-1)--++(2,0,0) --++(0,0,1) --++(-2,0,0) --++(0,0,1) --++(-3,0,0) --++(0,1,0) --++(-1,0,0) --++(0,-2,0) --++(4,0,0) --++(0,0,-1) --++(2,0,0) --++(0,0,1) --++(-2,0,0)--++(0,-1,0)--++(4,0,0) --++(0,0,2) --++(-1,0,0) --++(0,1,0) --++(-2,0,0) --++(0,1,0)--++(-4,0,0)--++(0,1,0)--++(-1,0,0)--++(0,-4,0)--++(8,0,0)--++(0,0,-2) --++ (0,6,0) --++(-2,0,0)--++(0,0,-1)--++(1,0,0)--++(0,2,0)--++(-3,0,0);
    
    \foreach \x in {0,1,2,3}{\draw(\x,0,10+\x) --++(10,0,0);}
    \foreach \x in {0,1,2,3,4,5,6}{\draw(0,0,3+\x) --++(3+\x,0,0);}
    \foreach \x in {0,...,13}{
    \draw(-10+\x,-3,0) --++ (0,-11,0);
    }
    \end{scope}
    
    \vdent 003 \vdent006 \vdent 107 \vdent 109 \vdent 304
    \vdent 305 \vdent 406 \vdent 707 \vdent 508 \vdent 809
    \vdent 60{10} \vdent 90{10} \vdent 90{11} \vdent {11}0{11}  \vdent {11}0{12}
    
    \draw[line width=1pt, blue] (0,-.5,4) -- ++(0,0,-1) -- ++(3,0,0);
    
    \draw[line width=1pt, blue] (0,-.5,7) -- ++(0,0,-1) -- ++(3,0,0) -- ++(0,0,-2) -- ++(1,0,0);
    
    \draw[line width=1pt, blue] (0,-.5,8) --++(1,0,0)-- ++(0,0,-1) -- ++(3,0,0) -- ++(0,0,-1) -- ++(2,0,0);
    
    \draw[line width=1pt, blue] (0,-.5,10) --++(1,0,0)-- ++(0,0,-1) -- ++(4,0,0) -- ++(0,0,-1) -- ++(2,0,0) -- ++(0,0,-1);
    
    \draw[line width=1pt, blue] (0,-1.5,10) --++(5,0,0)-- ++(0,0,-1) -- ++(2,0,0) -- ++(0,0,-1) -- ++(1,0,0);
    
    \draw[line width=1pt, blue] (0,-2.5,10) --++(7,0,0)-- ++(0,0,-2) -- ++(1,0,0);
    
    \draw[line width=1pt, blue] (0,-3.5,10) --++(8,0,0)-- ++(0,0,-2);
    
    \dentB{3}{3}
    \dentB{3}{6}
    \dentC{3}{2}
    \dentC{3}{5}
    \dentC{3}{6}
    
    \draw   node[circle,fill=blue,scale=.5] at (0,-.5,4) {}
            node[circle,fill=blue,scale=.5] at (0,-.5,7) {}
            node[circle,fill=blue,scale=.5] at (0,-.5,8) {}
            node[circle,fill=blue,scale=.5] at (0,-.5,10) {}
            node[circle,fill=blue,scale=.5] at (1,-.5,11) {}
            node[circle,fill=blue,scale=.5] at (2,-.5,12) {}
            node[circle,fill=blue,scale=.5] at (3,-.5,13) {};
            
    \draw   node[circle,thick,draw,blue,fill=white,scale=.5] at (0,-3.5,0){}
            node[circle,thick,draw,blue,fill=white,scale=.5] at (0,-4.5,0){}
            node[circle,thick,draw,blue,fill=white,scale=.5] at (0,-6.5,0){}
            node[circle,thick,draw,blue,fill=white,scale=.5] at (0,-7.5,0){}
            node[circle,thick,draw,blue,fill=white,scale=.5] at (0,-9.5,0){}
            node[circle,thick,draw,blue,fill=white,scale=.5] at (0,-10.5,0){}
            node[circle,thick,draw,blue,fill=white,scale=.5] at (0,-11.5,0){};

\end{tikzpicture}

(b)
\end{multicols}
\end{center}
\caption{Tilings of $H_{3,4,2,5,(3,6),(2,5,6)}$ correspond to lattice paths which start on edges along the southwest borders of the region (including along the southwest side of the hexagon and also along the western dents) and end at edges along the northeast borders, the steps of which are directly east or northeast. The correspondence is as follows: given a tiling, place dots in the middle of these edges as exhibited in (a). Then, connect each dot by a path to the middle of the opposite edge of the lozenge containing it, and then to the middle of the opposite edge of the next lozenge, and so on until the dots are connected by paths. Warping the picture as in (b) transforms the paths into non-intersecting Up-Right lattice paths. This process is reversible.} 
    \label{fig:lattice}
\end{minipage}
\end{figure}
    
    The ``start points'' along the southwest side of the region can be interpreted as having coordinates $\{(-i,i): i \in [b]\}$, and the start points from the dents would then have coordinates $\{(-b,b+c+t+1-{v_i}): i\in [n]\}$. The ``end points'' then have coordinates $\{(a-b-1+j,b+c+t+1-j): j \in [b+t] - \{u_i: i \in [m]\}\}$. It is clear these satisfy the path-intersection condition of \thref{GV} when each set is labeled with indices increasing from north to south.
    
    Applying \thref{GV}, the number of lattice paths with these start and end points is given by the determinant of a matrix, the entries of which are of the form ${a-1+v_i \choose v_i-j}$ and ${a+c+t\choose b+c+t+1-j-i}$. Since each entry in the matrix is a polynomial on $a$, so is the determinant. 

\end{proof}
The arguments that follow are largely about the proportionality of polynomials. For $p, q$ polynomials in $a$ with we will use the non-standard notation
$$p(a) \aequiv q(a)$$
to mean there is a nonzero constant $c$ with $c\cdot p(a)=q(a)$. We will sometimes use this notation where $p,q$ are functions over many parameters which are polynomials over $a$ when all other parameters are fixed, in which case $c$ may depend on parameters other than $a$.
It is easy to check $\aequiv$ is an equivalence relation and satisfies the following properties, where $p_1,p_2,$ and $q$ are polynomials in $a$, and $q$ is not identically 0.
\begin{align}
    \brac{p_1(a)} \aequiv \brac{p_2(a)} & \Rightarrow \brac{p_1(a)+p_2(a)} \aequiv \brac{p_1(a)} \aequiv \brac{p_2(a)} \label{eq:sum}\\
    \brac{p_1(a)} \aequiv \brac{p_2(a)} & \iff \brac{p_1(a)q(a)} \aequiv \brac{p_2(a) q(a)} \label{eq:substitute}
\end{align}

We will make use of the following technical lemma.
\begin{lemma}\thlabel{sub}
    For $d \in \mathbb Z$ and $y \in \mathbb Z^+$ and $z \in \mathbb N$,
    \begin{equation} \label{eq:sub}
    \brac{\Pl(a+d,y-1, z+1)} \aequiv \brac{\Pl(a+d,y,z) \frac{(a+d+z+1)_{y-1}}{(a+d+y)_z}}.
    \end{equation}
\end{lemma}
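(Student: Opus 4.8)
The plan is to reduce the statement to MacMahon's product \eqref{eq:MacMahon} and then carry out a one-line Pochhammer manipulation. Since the constant hidden in $\aequiv$ may depend on every parameter except $a$, and $d$ is such a parameter, I would first set $A := a+d$ and prove the equivalent claim that $\Pl(A,y-1,z+1) \aequiv \Pl(A,y,z)\,\frac{(A+z+1)_{y-1}}{(A+y)_z}$, where now $\aequiv$ means proportional up to a nonzero factor independent of $A$ (allowed to depend on $y,z$). Establishing this in the variable $A$ is exactly \eqref{eq:sub} after undoing the substitution.

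Next I would expand both tiling functions via \eqref{eq:MacMahon}: $\Pl(A,y,z)=\prod_{i=1}^z \frac{(A+i)_y}{(i)_y}$ and $\Pl(A,y-1,z+1)=\prod_{i=1}^{z+1}\frac{(A+i)_{y-1}}{(i)_{y-1}}$. The denominator factors $(i)_y$ and $(i)_{y-1}$ do not involve $A$, so by \eqref{eq:substitute} they may be absorbed into the proportionality constant, leaving
\[
\Pl(A,y,z)\aequiv \prod_{i=1}^z (A+i)_y, \qquad \Pl(A,y-1,z+1)\aequiv \prod_{i=1}^{z+1}(A+i)_{y-1}.
\]
These are nonzero polynomials in $A$, which is what makes the cancellations below legitimate.

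The key algebraic input is the one-step recursion $(A+i)_y=(A+i)_{y-1}(A+i+y-1)$ (valid since $y\in\mathbb Z^+$), which gives $\prod_{i=1}^z(A+i)_y=\big(\prod_{i=1}^z(A+i)_{y-1}\big)\big(\prod_{i=1}^z(A+i+y-1)\big)$. Reindexing by $k=i-1$ shows $\prod_{i=1}^z(A+i+y-1)=\prod_{k=0}^{z-1}(A+y+k)=(A+y)_z$. Substituting this into the right-hand side of the claim and cancelling the factor $(A+y)_z$ yields
\[
\prod_{i=1}^z(A+i)_y\cdot \frac{(A+z+1)_{y-1}}{(A+y)_z}=\prod_{i=1}^z(A+i)_{y-1}\cdot (A+z+1)_{y-1}.
\]
Peeling the top term off the other product gives $\prod_{i=1}^{z+1}(A+i)_{y-1}=\big(\prod_{i=1}^z(A+i)_{y-1}\big)(A+z+1)_{y-1}$, which is literally the same expression. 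Hence the two sides of the claim agree up to the $A$-free constant coming from the discarded $(i)_y$ and $(i)_{y-1}$, proving the equivalence.

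There is no genuine analytic obstacle here; the entire content is bookkeeping. The only points requiring care are (i) verifying that the products being cancelled are not identically zero as polynomials in $A$, so that \eqref{eq:substitute} applies, and (ii) tracking precisely which factors are free of $A$ so that they can be legitimately absorbed into the $\aequiv$ constant. Both are routine, and the substitution $A=a+d$ keeps the computation clean by isolating the single variable over which $\aequiv$ is measured.
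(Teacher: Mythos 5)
Your proof is correct and follows essentially the same route as the paper: both arguments reduce the claim to MacMahon's product formula \eqref{eq:MacMahon} and perform a direct Pochhammer manipulation, the only difference being that the paper computes the exact proportionality constant $(y)_z/(z+1)_{y-1}$ (its equation \eqref{eq:subgen}) while you absorb the $a$-free factors into the $\aequiv$ constant, which is all the lemma requires.
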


\begin{proof}
    It is straightforward to show from the definition of $\Pl(x,y,z)$ that $\frac{\Pl(x,y,z+1)}{\Pl(x,y,z)} = \frac{(x+z+1)_y}{(z+1)_y}$; in turn it is straightforward to show
    \begin{equation}\label{eq:subgen}
    \Pl(x,y-1,z+1) = \Pl(x,y,z)\frac{(y)_{z}(x+z+1)_{y-1}}{(x+y)_{z}(z+1)_{y-1}}.
    \end{equation}
Equation \eqref{eq:sub} follows from equation \eqref{eq:subgen} where $x=a+d$.
\end{proof}

\section{Proof of \thref{main}}

For $b,c \in \mathbb N$ and $\vec u = (u_i)_1^m, \vec v = (v_j)_1^n$ vectors over $\mathbb N$ so that there exists a well-defined region $H_{0,b,c,m+n, \vec u, \vec v}$, define
$$f_{b,c,\vec u, \vec v}(a) := \frac{\Pl(a,b+n, c+m)}{\prod_{i=1}^m(a+u_i)_{\underline{u_i}}\prod_{j=1}^n(a+v_j)_{\underline{v_j}}}.$$
The main method of our proof is to show that $\M(H_{a,b,c,m+n,\vec u, \vec v})$ interpreted as a function in $a$ is proportional to $f$.
We will do this by showing $f$ is a polynomial in $a$ which can be defined recursively and obeys the same recursion as the tiling function. Observe that the equation in \thref{main} can be expressed
$$\brac{\M(H_{a,b,c,m+n,\vec u, \vec v})} \aequiv \brac{f_{b,c,\vec u, \vec v}(a)}.$$

It will sometimes be convenient for us to assume that $u_1 >1$ or $v_1>1$. A dented hexagon with $u_1=v_1=1$ has no tilings by \thref{existence}, in which case \thref{main} holds by vacuously. In the case that $u_1 = 1$, and $v_1> 1$ or $n=0$, the top row of triangles of the region $H_{a,b,c,t,\vec u, \vec v}$ are covered by forced lozenges, and may be removed from the region. Figure \ref{fig:fig4} (a) depicts an example of this. Removing the forced lozenges gives the region $H_{a+1,b,c,t-1,(u_{i}-1)_{i=2}^m, (v_j -1)_{j=1}^n}$, which thus has the same tiling function as $H_{a,b,c,t,\vec u, \vec v}$. We should like to show that our definition of $f_{b,c,\vec u, \vec v}$ respects this and the analogous case with $v_1=1.$

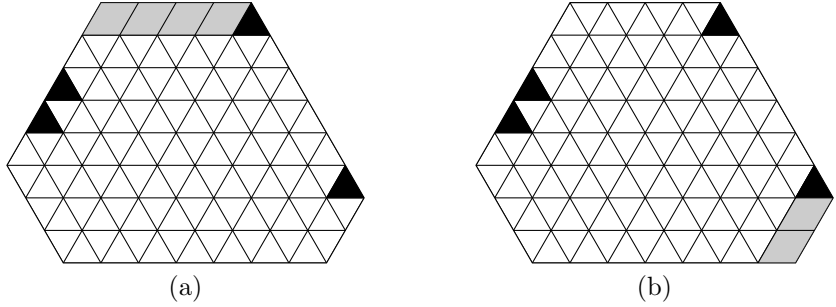
\begin{figure}
\begin{center}
\begin{multicols}{2}

\begin{tikzpicture}[x={(0:1cm)},y={(120:1cm)},z={(240:1cm)},scale=.5]
    
    \hex{4}{3}{2}{3}
    \dentB{4}{1}
    \dentB{4}{6}
    \dentC{4}{3}
    \dentC{4}{4}
    
    \draw[fill=black!20] (0,-4,0) -- ++(-4,0,0) -- ++(0,0,1) -- ++(4,0,0) -- cycle;
    
    \draw (0,-3,1) -- ++(0,0,1);
    \draw (-1,-3,1) -- ++(0,0,1);
    \draw (-2,-3,1) -- ++(0,0,1);
    
\end{tikzpicture}

(a)

\vfill \null \columnbreak

\begin{tikzpicture}[x={(0:1cm)},y={(120:1cm)},z={(240:1cm)},scale=.5]
    
    \hex{4}{3}{2}{3}    
    \draw[fill=black!20] (0,-10,0) -- ++(0,0,2) -- ++(-1,0,0) -- ++(0,0,-3) -- cycle;
    
    \draw (0,-10,1)--++(-1,0,0);
    \dentB{4}{1}
    \dentB{4}{6}
    \dentC{4}{3}
    \dentC{4}{4}

\end{tikzpicture}

(b)
\end{multicols}
\end{center}
\caption{$H_{4,3,2,3,(1,6),(3,4)}$ (a) the top row is forced since $u_1=1$; (b) the southeast side is forced since $\underline{u_m}=0$.} 
    \label{fig:fig4}
\end{figure}

If $\underline{u_m}=0$ (or $\underline{v_n}=0$), then the southeast (or southwest) side of the region is covered by forced lozenges. 
If $\underline{u_m}=0$, then $H_{a,b,c,t,\vec u, \vec v}$ thus has the same tiling function as $H_{a,b,c+1,t-1,(u_i)_{i=1}^{m-1}, \vec v}$, which is the region that is obtained by removing the forced tiles. This is depicted in Figure \ref{fig:fig4} (b). We wish to show our definition of $f_{b,c, \vec u, \vec v}$ respects this fact, and the analogous fact when $\underline{v_n}=0$.

\begin{lemma}\thlabel{loptop} If $H_{a,b, c, t,(u_i)_1^m, (v_j)_1^n}$ is tileable then
    \begin{enumerate}[label=\alph*.]
        \item If $u_1 = 1$, and $v_1>1$ or $n=0$, then
        $\brac{f_{b,c,\vec u, \vec v}(a)} \aequiv \brac{f_{b,c,(u_i-1)_2^m, (v_j-1)_1^n}(a+1)}.$
        \item If $v_1=1$, and $u_1>1$ or $m=0$, then
        $\brac{f_{b,c,\vec u, \vec v}(a)} \aequiv \brac{f_{b,c,(u_i-1)_1^m, (v_j-1)_2^n}(a+1)}.$
        \item If $\underline{u_m} = 0$ then
        $f_{b,c,\vec u, \vec v}(a) = f_{b,c+1,(u_i)_{i=1}^{m-1}, (v_j)_1^n}(a).$
        \item If $\underline{v_n} = 0$ then
        $f_{b,c,\vec u, \vec v}(a) = f_{b+1,c,(u_i)_{i=1}^{m}, (v_j)_1^{n-1}}(a).$ 
    \end{enumerate}
\end{lemma}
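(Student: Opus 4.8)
The plan is to verify all four identities by direct computation from the definition of $f_{b,c,\vec u,\vec v}$, the definitions $\underline{u_i}=b+n+i-u_i$ and $\underline{v_j}=c+m+j-v_j$, and MacMahon's product formula \eqref{eq:MacMahon} for $\Pl$. The organizing observation is that under each of the four reparametrizations the companion lengths $\underline{u_i}$ and $\underline{v_j}$ are left \emph{unchanged}, and the arguments $b+n$ and $c+m$ of the numerator $\Pl(a,b+n,c+m)$ are likewise unchanged (or shift in a controlled way); so after matching the surviving denominator factors, each claim collapses to a single Pochhammer identity. Parts (a) and (b) are symmetric to one another under interchanging the two dented sides (equivalently $b\leftrightarrow c$, $m\leftrightarrow n$, $\vec u\leftrightarrow\vec v$), as are (c) and (d), so I would prove (a) and (c) in detail and deduce (b) and (d) by this symmetry, also invoking that $\Pl$ is symmetric in its last two arguments.

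For (c), assume $\underline{u_m}=0$, so the $i=m$ denominator factor is $(a+u_m)_0=1$ and drops out. Passing from parameters $(b,c,m,n)$ to $(b,\,c+1,\,m-1,\,n)$, I would check that $b+n$ is unchanged, that $c+m=(c+1)+(m-1)$ is unchanged, and that every surviving $\underline{u_i}$ (for $i<m$) and every $\underline{v_j}$ is literally unchanged. Hence the numerator $\Pl(a,b+n,c+m)$ and the whole surviving denominator agree term-by-term, giving the stated \emph{exact} equality. Part (d) is the same argument with the roles of the two sides swapped.

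For (a), assume $u_1=1$, so $\underline{u_1}=b+n$ and the $i=1$ denominator factor equals $(a+1)_{b+n}$. The reparametrization sends $\vec u\mapsto(u_i-1)_2^m$, $\vec v\mapsto(v_j-1)_1^n$, $a\mapsto a+1$, and $(m,n)\mapsto(m-1,n)$; the shift $a\mapsto a+1$ exactly compensates the $-1$ in each dent position inside the Pochhammer bases (since $(a+1)+(u_i-1)=a+u_i$), and one checks each surviving $\underline{u_i},\underline{v_j}$ is unchanged, so all denominator factors other than $(a+1)_{b+n}$ coincide on the two sides. Cancelling this common factor via \eqref{eq:substitute}, the claim reduces to
$$\brac{\tfrac{\Pl(a,b+n,c+m)}{(a+1)_{b+n}}}\aequiv\brac{\Pl(a+1,\,b+n,\,c+m-1)}.$$
I would settle this from MacMahon's formula by the short telescoping computation $\frac{\Pl(a,y,z)}{\Pl(a+1,y,z-1)}=\frac{(a+1)_y}{(z)_y}$, so that with $y=b+n$ and $z=c+m$ the two sides differ only by the $a$-free constant $(c+m)_{b+n}$. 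Part (b) follows symmetrically, reducing to $\frac{\Pl(a,b+n,c+m)}{(a+1)_{c+m}}\aequiv\Pl(a+1,b+n-1,c+m)$ via the companion identity $\frac{\Pl(a,y,z)}{\Pl(a+1,y-1,z)}=\frac{(a+1)_z}{(y)_z}$.

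The computations are routine; the only genuine care is bookkeeping. The step most prone to error is the reindexing of the dent vectors together with the $a\mapsto a+1$ substitution in (a) and (b): I must confirm both that decrementing a dent position while incrementing $a$ leaves each base $a+u_i$ invariant, and that the formula $\underline{u_i}=b+n+i-u_i$ returns the same value after the dent's index drops by one and the pair $(m,n)$ changes. I would note explicitly that the hypotheses ``$v_1>1$ or $n=0$'' in (a) and ``$u_1>1$ or $m=0$'' in (b) are not needed for the algebra; they only guarantee that the target family $H_{b,c,(u_i-1)_2^m,(v_j-1)_1^n}$ has a legitimate (positive, strictly increasing) dent vector, so that $f$ is defined there.
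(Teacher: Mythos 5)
Your proposal is correct and takes essentially the same route as the paper, which simply asserts that "these identities are straightforward to check when written explicitly"; you have carried out that explicit check, correctly observing that each reparametrization preserves every $\underline{u_i}$, $\underline{v_j}$ and each Pochhammer base, and reducing (a) and (b) to the valid identity $\Pl(a,y,z)/\Pl(a+1,y,z-1)=(a+1)_y/(z)_y$ (an $a$-free constant times the claim). Your remarks on the role of the hypotheses ``$v_1>1$ or $n=0$'' and on the $b\leftrightarrow c$, $m\leftrightarrow n$ symmetry are also accurate.
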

\begin{proof}
These identities are straightforward to check when written explicitly.
\end{proof}


Let $\vec \emptyset$ denote the empty vector. We will now verify that \thref{main} holds when $\vec u = \vec \emptyset$. The case where $\vec v = \vec \emptyset$ follows by symmetry. 

\begin{lemma}\thlabel{CLP}
    The dented hexagon $H_{a,b,c,n,\vec \emptyset, (v_j)_1^n}$ is tileable and
    \begin{equation}\label{eq:eqCLP}
    \brac{\M(H_{a,b,c,n,\vec \emptyset, (v_j)_1^n})} \aequiv f_{b,c, \vec \emptyset, \vec v}(a) := \brac{ \frac{\Pl(a,b+n,c)}{\prod_{j=1}^n (a+v_j)_{\underline{v_j}}}}.
    \end{equation}
\end{lemma}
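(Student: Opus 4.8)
The plan is to dispatch tileability with \thref{existence}, then evaluate $\M(H_{a,b,c,n,\vec\emptyset,\vec v})$ as a Lindström--Gessel--Viennot determinant and match it to $f_{b,c,\vec\emptyset,\vec v}(a)$ up to a constant in $a$. For tileability, note that with all dents on the northwest side the $j$th dent (counted from the north) lies in the horizontal strip between $L_{v_j-1}$ and $L_{v_j}$, so $\mu_N=\#\{j:v_j\le N\}$. Since $v_1<\dots<v_n$ are distinct positive integers we have $v_j\ge j$, whence $v_j\le N$ forces $j\le N$ and therefore $\mu_N\le N$ for all $N$; \thref{existence} then yields a tiling. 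The same reasoning, $v_j\le v_n-(n-j)\le c+j$, shows $\underline{v_j}=c+j-v_j\ge 0$, so the right-hand side of \eqref{eq:eqCLP} is at least formally defined.

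For the count I would use the path model of the previous lemma: with $\vec u=\vec\emptyset$, $m=0$, $t=n$, the tilings biject with families of $b+n$ nonintersecting Up-Right lattice paths, and \thref{GV} gives $\M=\det A$ for a $(b+n)\times(b+n)$ matrix $A$ whose first $b$ rows (the southwest-side starts) have entries $\binom{a+c+n}{b+c+n+1-i-j}$ and whose last $n$ rows (the dent starts) have entries $\binom{a-1+v_k}{v_k-j}$. This is exactly a hexagon dented along a single side, i.e.\ the Cohn--Larsen--Propp situation, and $\det A$ evaluates to a Vandermonde-type product in the positions $v_1,\dots,v_n$ together with the straight-side positions.

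The crux is to show this product is $\aequiv \Pl(a,b+n,c)/\prod_{j=1}^n(a+v_j)_{\underline{v_j}}$. Because only proportionality in $a$ is needed, I would expand both quantities into Pochhammer symbols using MacMahon's formula \eqref{eq:MacMahon} for $\Pl(a,b+n,c)=\M(H_{a,b+n,c})$ and \thref{sub}, and check that the $a$-dependent factors agree: concretely, that the linear factors $\prod_j(a+v_j)_{\underline{v_j}}=\prod_j\prod_{s=v_j}^{c+j-1}(a+s)$ form a sub-multiset of the numerator $\prod_{i=1}^c\prod_{s=i}^{i+b+n-1}(a+s)$ of $\Pl(a,b+n,c)$, and that deleting them leaves exactly a constant times $\det A$. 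This divisibility --- which is also what makes $f$ a genuine polynomial and hence squares with the previous lemma --- is where the bound $v_j\le c+j$ is used.

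The main obstacle is obtaining the explicit product for $\det A$ with the correct parametrization (equivalently, pinning down the Cohn--Larsen--Propp evaluation in the coordinates above); the structured form of $A$, whose top block is constant along antidiagonals, is what makes such a factorization possible. Once that product is in hand the matching is bookkeeping governed by $\aequiv$ and properties \eqref{eq:sum}--\eqref{eq:substitute}, with the overall constant irrelevant. A useful sanity check at the extreme $v_j=c+j$ (all $\underline{v_j}=0$) is that both sides collapse to $\Pl(a,b+n,c)=\M(H_{a,b+n,c})$, which fixes the degree $(b+n)c-\sum_j\underline{v_j}$ that the general matching must reproduce.
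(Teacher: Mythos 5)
Your tileability argument is fine (and more explicit than the paper's one-line appeal to \thref{existence}), but the enumeration half of your proposal has a genuine gap at exactly the point you flag as ``the main obstacle.'' You reduce everything to evaluating the LGV determinant $\det A$ in closed form and then matching its linear factors in $a$ against those of $\Pl(a,b+n,c)/\prod_j(a+v_j)_{\underline{v_j}}$. But no such evaluation is supplied: the matrix $A$ is a two-block matrix (binomials constant along antidiagonals on top, dent-start binomials below), and asserting that its determinant ``evaluates to a Vandermonde-type product'' is precisely the content of the lemma, not a known input you can cite and then bookkeep around. The paper itself warns that the equivalence with the Cohn--Larsen--Propp evaluation ``is not obvious,'' which is why it does \emph{not} take this route. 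Moreover, your sub-multiset argument only shows that $f_{b,c,\vec\emptyset,\vec v}(a)$ is a polynomial in $a$ (divisibility of $\Pl(a,b+n,c)$ by $\prod_j(a+v_j)_{\underline{v_j}}$); it cannot establish $\det A\aequiv f$ without already knowing how $\det A$ factors as a polynomial in $a$ --- a priori it need not split into the same linear factors, and comparing degrees at the extreme $v_j=c+j$ fixes nothing about the general case.

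For contrast, the paper proves \thref{CLP} by induction on $c+n$: the base cases are $c=0$ (unique tiling) and $n=0$ (MacMahon), the boundary cases $v_1=1$ and $\underline{v_n}=0$ are handled by stripping forced lozenges and invoking \thref{loptop}, and the main step embeds $H_{a,b,c,n,\vec\emptyset,\vec v}$ into a larger region $R_a$ and applies Kuo condensation (\thref{kuo}) to four boundary triangles. Five of the six regions in the resulting identity have strictly smaller $c+n$, so the inductive hypothesis applies to them, and an algebraic manipulation via \thref{sub} and relations \eqref{eq:sum}--\eqref{eq:substitute} shows $f$ satisfies the same recurrence, forcing $\M\aequiv f$. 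If you want to salvage your determinant route you would need to actually perform the evaluation of $\det A$ (e.g.\ by row reduction to a known determinant, or by identifying the region with a CLP semi-trapezoid after further forced-lozenge removal), which is a substantial piece of work that your proposal leaves unaddressed.
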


This is equivalent to a result by Cohn, Larsen, and Propp \cite{CLP}, but the equivalence is not obvious so we will prove this result directly using Kuo Condensation.

\begin{proof}
Dented hexagons with dents on just one side are tileable by \thref{existence}.

We will induct on $n+c$, with base cases at $n=0$ and $c=0$. Note that if $c=0$ then the region has a unique tiling\footnote{The entire northwest side is comprised of dents, and the unique tiling extends to the unique tiling of the $a \times b$ parallelogram.} and $f_{b,0, \vec \emptyset, \vec v}(a):= \frac{\Pl(a,b+n,0)}{\prod_{j=1}^n(a+v_j)_{0}} = 1.$ In the case $n=0$, equation \eqref{eq:eqCLP} reduces to MacMahon's formula \eqref{eq:MacMahon}.

For our inductive hypothesis, assume that \eqref{eq:eqCLP} holds for dented hexagons $H_{a,b',c',n',\vec \emptyset, (v_j')_1^{n-1}}$ with $c'+n' < c+n$. Consider dented hexagons $H_{a,b,c,n,\vec \emptyset, (v_j)_1^n}$. 

In the case $v_1=1$, $H_{a,b,c,n,\vec \emptyset, (v_j)_1^n}$ has forced lozenges along its northern side so has the same tiling function as $H_{a+1,b,c,n-1,\vec \emptyset, (v_j-1)_2^{n}}$, so
\begin{align*}
    \brac{\M\left(H_{a,b,c,n,\vec \emptyset, (v_j)_1^n}\right)}&\aequiv
    \brac{\M\left(H_{a+1,b,c,n-1,\vec \emptyset, (v_j-1)_2^{n}}\right)}\\
    (\mbox{by IH}) &\aequiv \brac{f_{b,c,\vec \emptyset, (v_j-1)_2^{n}}(a+1)}\\
    (\mbox{by \thref{loptop}b}) &\aequiv \brac{ f_{b,c,\vec \emptyset, (v_j)_1^{n}}(a)}.
\end{align*}

In the case $\underline{v_n}=0$, $H_{a,b,c,n,\vec \emptyset, (v_j)_1^n}$ has forced lozenges along its southwest side so has the same tiling function as $H_{a,b+1,c,n-1,\vec \emptyset, (v_j)_1^{n-1}}$, so
\begin{align*}
    \brac{\M\left(H_{a,b,c,n,\vec \emptyset, (v_j)_1^n}\right)}&\aequiv
    \brac{\M\left(H_{a,b+1,c,n-1,\vec \emptyset, (v_j)_1^{n-1}}\right)}\\
    (\mbox{by IH}) &\aequiv \brac{f_{b,c+1,\vec \emptyset, (v_j)_1^{n-1}}(a)}\\
    (\mbox{by \thref{loptop}d}) &\aequiv \brac{ f_{b,c,\vec \emptyset, (v_j)_1^{n}}(a)}.
\end{align*}

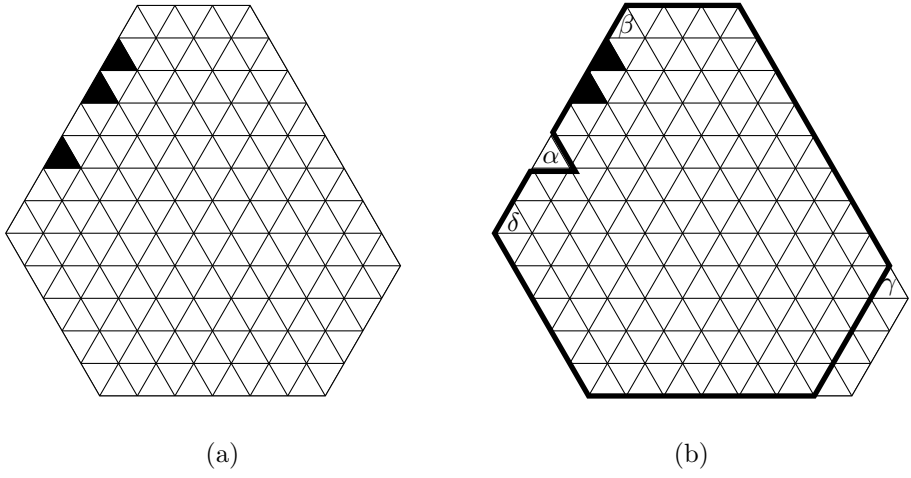
\begin{figure}
\begin{minipage}[c]{\textwidth}
\begin{center}

\begin{tikzpicture}[x={(0:1cm)},y={(120:1cm)},z={(240:1cm)},scale=.5]
    \begin{scope}
    \hex{3}{5}{4}{3}
    \dentC{3}{2}
    \dentC{3}{3}
    \dentC{3}{5}
    \end{scope}
    
    \begin{scope}[xshift=13cm]
    
    \draw[line width=2pt] (0,0,3)-- ++(0,0,3.9) --++(0,-1.2,0) --++(-1.2,0,0) --++(0,0,1.9) -- ++(0,-5,0) -- ++(6,0,0) -- ++(0,0,-4) -- ++(0,8,0) -- cycle;

    \node (A) at (0,-.33,7.33){$\alpha$};
    \node (B) at (0,-.33,3.33){$\beta$};
    \node (C) at (0,-11.33,.33){$\gamma$};
    \node (D) at (0,-.33,9.33){$\delta$};
    
    \hex{3}{5}{3}{4}
    \dentC{3}{2}
    \dentC{3}{3}
    \end{scope}
    
\end{tikzpicture}

\begin{multicols}{2}

(a)

(b)
\end{multicols}
\end{center}
\caption{(a) $H_{3,5,4,3,\vec \emptyset,(2,3,5)}$; (b) $R_3$ with $\alpha, \beta, \gamma, \delta$ labeled, and the border of $H_{3,5,4,3,\vec \emptyset, (2,3,5)}$ depicted with a thick line.} 
    \label{fig:fig8}
\end{minipage}
\end{figure}

We therefore assume that $n>0$, $v_1>1$, and $\underline{v_n}>0$. Regard $H_{a,b,c,n,\vec \emptyset, \vec v}$ as a subregion of the (unbalanced) region $R_a=H_{a, b, c-1, n+1, \vec \emptyset, (v_j+1)_{j=1}^{n-1}}$. 
Within each region $R_a$, let $\alpha$ be the unit triangle indexed by $v_n$, let $\beta$ be the northmost triangle on the northwest side, let $\gamma$ be the southmost triangle along the northeast side, and let $\delta$ be the southmost triangle along the northwest side. These placements are depicted in Figure \ref{fig:fig8}.

We will apply Kuo Condensation (\thref{kuo}) to $R_a, \alpha, \beta, \gamma, \delta$.
Figure \ref{fig:fig9}  depicts each region referenced in the condensation formula, in some cases with forced lozenges shaded (we regard these as not being part of the region). Note they are all balanced dented hexagons.
Since these regions can be regarded as dented hexagons with dents on only one side, each is tileable by \thref{existence}. The parameter $c+n$ is strictly minimal on $R_a - \alpha - \gamma$, so we may apply the inductive hypothesis to each of the other regions.

\begin{figure} \vspace{-1cm}
\begin{minipage}[c]{\textwidth}
\begin{center}
\begin{multicols}{2}

$R_a - \alpha - \gamma$

\begin{tikzpicture}[x={(0:1cm)},y={(120:1cm)},z={(240:1cm)},scale=.45]

    \node (A) at (0,-.33,7.33){$\alpha$};
    \node (B) at (0,-.33,3.33){$\beta$};
    \node (C) at (0,-11.33,.33){$\gamma$};
    \node (D) at (0,-.33,9.33){$\delta$};
    
    \hex{3}{5}{3}{4}
    \dentC{3}{2}
    \dentC{3}{3}
    
    
    \dentC{3}{5}
    \node (A) at (0,-.33,7.33){\color{white}$\alpha$};
    
    
    \dentB{3}{9} \draw[fill=black!20] (0,-12,0) -- ++(0,0,4) -- ++(-1,0,0) -- ++(0,0,-4) -- cycle;
    \node (C) at (0,-11.33,.33){\color{white}$\gamma$};
    \draw (0,-12,1) -- ++(-1,0,0);
    \draw (0,-12,2) -- ++(-1,0,0);
    
    
\end{tikzpicture}

$H_{a,b,c,n, \vec \emptyset, (v_j)_1^n}$

\vfill \null \columnbreak

$R_a - \beta - \delta$

\begin{tikzpicture}[x={(0:1cm)},y={(120:1cm)},z={(240:1cm)},scale=.45]

    \node (A) at (0,-.33,7.33){$\alpha$};
    \node (B) at (0,-.33,3.33){$\beta$};
    \node (C) at (0,-11.33,.33){$\gamma$};
    \node (D) at (0,-.33,9.33){$\delta$};
    
    \hex{3}{5}{3}{4}
    \dentC{3}{2}
    \dentC{3}{3}
    
    
    
    \dentC{3}{1} \draw[fill=black!20] (0,0,3) -- ++(0,-1,0) -- ++(3,0,0) -- ++(0,1,0) -- cycle;
    \node (B) at (0,-.33,3.33){\color{white}$\beta$};
    \draw (1,0,3) --++(0,-1,0);
    \draw (2,0,3) --++(0,-1,0);
    
    
    \dentC{3}{7} \draw[fill=black!20] (0,0,10) -- ++(1,0,0) -- ++(0,-5,0) -- ++(-1,0,0) -- cycle;
    \node (D) at (0,-.33,9.33){\color{white}$\delta$};
    \draw (0,-1,10) --++(1,0,0);
    \draw (0,-2,10) --++(1,0,0);
    \draw (0,-3,10) --++(1,0,0);
    \draw (0,-4,10) --++(1,0,0);
    
\end{tikzpicture}

$H_{a+1,b+1,c-1,n-1, \vec \emptyset, (v_j-1)_1^{n-1}}$

\end{multicols}\vspace{-.5cm}
\begin{multicols}{2}

$R_a - \alpha - \delta$
\begin{tikzpicture}[x={(0:1cm)},y={(120:1cm)},z={(240:1cm)},scale=.45]

    \node (A) at (0,-.33,7.33){$\alpha$};
    \node (B) at (0,-.33,3.33){$\beta$};
    \node (C) at (0,-11.33,.33){$\gamma$};
    \node (D) at (0,-.33,9.33){$\delta$};
    
    \hex{3}{5}{3}{4}
    \dentC{3}{2}
    \dentC{3}{3}
    
    
    \dentC{3}{5}
    \node (A) at (0,-.33,7.33){\color{white}$\alpha$};
    
    
    
    \dentC{3}{7} \draw[fill=black!20] (0,0,10) -- ++(1,0,0) -- ++(0,-5,0) -- ++(-1,0,0) -- cycle;
    \node (D) at (0,-.33,9.33){\color{white}$\delta$};
    \draw (0,-1,10) --++(1,0,0);
    \draw (0,-2,10) --++(1,0,0);
    \draw (0,-3,10) --++(1,0,0);
    \draw (0,-4,10) --++(1,0,0);
    
\end{tikzpicture}
$H_{a,b+1,c-1,n,\vec \emptyset, (v_j)_1^n}$

\vfill \null \columnbreak

$R_a - \beta - \gamma$

\begin{tikzpicture}[x={(0:1cm)},y={(120:1cm)},z={(240:1cm)},scale=.45]

    \node (A) at (0,-.33,7.33){$\alpha$};
    \node (B) at (0,-.33,3.33){$\beta$};
    \node (C) at (0,-11.33,.33){$\gamma$};
    \node (D) at (0,-.33,9.33){$\delta$};
    
    \hex{3}{5}{3}{4}
    \dentC{3}{2}
    \dentC{3}{3}
    
    
    
    \dentC{3}{1} \draw[fill=black!20] (0,0,3) -- ++(0,-1,0) -- ++(3,0,0) -- ++(0,1,0) -- cycle;
    \node (B) at (0,-.33,3.33){\color{white}$\beta$};
    \draw (1,0,3) --++(0,-1,0);
    \draw (2,0,3) --++(0,-1,0);
    
    \dentB{3}{9} \draw[fill=black!20] (0,-12,0) -- ++(0,0,4) -- ++(-1,0,0) -- ++(0,0,-4) -- cycle;
    \node (C) at (0,-11.33,.33){\color{white}$\gamma$};
    \draw (0,-12,1) -- ++(-1,0,0);
    \draw (0,-12,2) -- ++(-1,0,0);
    
    
\end{tikzpicture}

$H_{a+1,b,c,n-1, \vec \emptyset, (v_j-1)_1^{n-1}}$

\end{multicols}\vspace{-.5cm}
\begin{multicols}{2}

$R_a - \alpha - \beta$

\begin{tikzpicture}[x={(0:1cm)},y={(120:1cm)},z={(240:1cm)},scale=.45]

    \node (A) at (0,-.33,7.33){$\alpha$};
    \node (B) at (0,-.33,3.33){$\beta$};
    \node (C) at (0,-11.33,.33){$\gamma$};
    \node (D) at (0,-.33,9.33){$\delta$};
    
    \hex{3}{5}{3}{4}
    \dentC{3}{2}
    \dentC{3}{3}
    
    
    \dentC{3}{5}
    \node (A) at (0,-.33,7.33){\color{white}$\alpha$};
    
    \dentC{3}{1} \draw[fill=black!20] (0,0,3) -- ++(0,-1,0) -- ++(3,0,0) -- ++(0,1,0) -- cycle;
    \node (B) at (0,-.33,3.33){\color{white}$\beta$};
    \draw (1,0,3) --++(0,-1,0);
    \draw (2,0,3) --++(0,-1,0);
    
    
    
\end{tikzpicture}

$H_{a+1,b,c-1,n,\vec \emptyset, (v_j-1)_1^n}$

\vfill \null \columnbreak

$R_a - \gamma - \delta$

\begin{tikzpicture}[x={(0:1cm)},y={(120:1cm)},z={(240:1cm)},scale=.45]

    \node (A) at (0,-.33,7.33){$\alpha$};
    \node (B) at (0,-.33,3.33){$\beta$};
    \node (C) at (0,-11.33,.33){$\gamma$};
    \node (D) at (0,-.33,9.33){$\delta$};
    
    \hex{3}{5}{3}{4}
    \dentC{3}{2}
    \dentC{3}{3}
    
    
    
    
    \dentB{3}{9} \draw[fill=black!20] (0,-12,0) -- ++(0,0,4) -- ++(-1,0,0) -- ++(0,0,-4) -- cycle;
    \node (C) at (0,-11.33,.33){\color{white}$\gamma$};
    \draw (0,-12,1) -- ++(-1,0,0);
    \draw (0,-12,2) -- ++(-1,0,0);
    
    \dentC{3}{7} \draw[fill=black!20] (0,0,10) -- ++(1,0,0) -- ++(0,-5,0) -- ++(-1,0,0) -- cycle;
    \node (D) at (0,-.33,9.33){\color{white}$\delta$};
    \draw (0,-1,10) --++(1,0,0);
    \draw (0,-2,10) --++(1,0,0);
    \draw (0,-3,10) --++(1,0,0);
    \draw (0,-4,10) --++(1,0,0);
    
\end{tikzpicture}

$H_{a,b+1,c,n-1,\vec \emptyset, (v_j)_1^{n-1}}$

\end{multicols} \vspace{-1cm}
\end{center}
\caption{$R_a$ with two of $\alpha, \beta, \gamma, \delta$ removed and forced lozenges shaded.} 
    \label{fig:fig9}
\end{minipage}
\end{figure}

\thref{kuo} can be expressed
$$M(R-\alpha-\gamma)M(R-\beta-\delta)=M(R-\alpha-\delta)M(R-\beta-\gamma)-M(R-\alpha-\beta)M(R-\gamma-\delta).$$
Removing forced lozenges as depicted in Figure \ref{fig:fig9}, we can rewrite this:
\begin{equation}
    \begin{array}{rl}
         &M\left(H_{a,b,c,n,\vec \emptyset, (v_j)_1^n}\right)
         M\left(H_{a+1,b+1,c-1,n-1, \vec \emptyset, (v_j-1)_1^{n-1}}\right)\\[10pt]
         =&
         M\left(H_{a,b+1,c-1,n,\vec \emptyset, (v_j)_1^n}\right)
         M\left(H_{a+1,b,c,n-1,\vec \emptyset, (v_j-1)_1^{n-1}}\right)
         \\[10pt]
         -&
         M\left(H_{a+1,b,c-1,n,\vec \emptyset, (v_j-1)_1^n}\right)
         M\left(H_{a,b+1,c,n-1,\vec \emptyset, (v_j)_1^{n-1}}\right)
    \end{array}
\end{equation}

Applying the inductive hypothesis to each region except $H_{a,b,c,n,\vec \emptyset, (v_j)_1^n}$, this implies
\begin{equation}\label{eq:kuobase}
    \begin{array}{rl}
         &M\left(H_{a,b,c,n,\vec \emptyset, (v_j)_1^n}\right)
         \displaystyle\frac{\Pl(a+1,b+n,c-1)}{\prod_{j=1}^{n-1}(a+v_j)_{\underline{v_j}}}\\[15pt]
         \aequiv&
         \displaystyle\frac{\Pl(a,b+n+1,c-1)}{\prod_{j=1}^{n}(a+v_j)_{\underline{v_j}-1}}
         \cdot
         \displaystyle\frac{\Pl(a+1,b+n-1,c)}{\prod_{j=1}^{n-1}(a+v_j)_{\underline{v_j}+1}}\\[15pt]
         -&
         \displaystyle\frac{\Pl(a+1,b+n,c-1)}{\prod_{j=1}^{n}(a+v_j)_{\underline{v_j}}}
         \cdot
         \displaystyle\frac{\Pl(a,b+n,c)}{\prod_{j=1}^{n-1}(a+v_j)_{\underline{v_j}}}
    \end{array}
\end{equation}
We shall show that when the term $M(H_{a,b,c,n, \vec \emptyset, (v_j)_1^n})$ is replaced with $f_{b,c, \vec \emptyset, (v_j)_1^n}(a)$, the products on each line of \eqref{eq:kuobase} are $\aequiv$-equivalent, so that $$M(H_{a,b,c,n, \vec \emptyset, (v_j)_1^n}) \aequiv f_{b,c, \vec \emptyset, (v_j)_1^n}(a)$$ by relations $\eqref{eq:sum}$ and $\eqref{eq:substitute}$. It is clear that the product on the first line would be equal to the product on the third line when $f_{b,c,\vec \emptyset, (v_j)_1^n}(a)$ is written out explicitly, so it remains to show the products on the last two lines are $\aequiv$-equivalent. We will manipulate the terms on the second line to show this.
We rewrite
\begingroup
\allowdisplaybreaks
\begin{align}
\prod_{j=1}^n (a+v_j)_{\underline{v_j}-1} 
&= \prod_{j=1}^n (a+v_j)_{\underline{v_j}}/(a+c)_n\label{eq:sub1}\\
\prod_{j=1}^{n-1} (a+v_j)_{\underline{v_j}+1}
&=(a+c+1)_{n-1}\prod_{j=1}^{n-1} (a+v_j)_{\underline{v_j}}. \label{eq:sub2}
\end{align}
\endgroup

We can therefore rewrite:
\begin{align*}
        &\displaystyle\frac{\Pl(a,b+n+1,c-1)}{\prod_{j=1}^{n}(a+v_j)_{\underline{v_j}-1}}
        \cdot
        \displaystyle\frac{\Pl(a+1,b+n-1,c)}{\prod_{j=1}^{n-1}(a+v_j)_{\underline{v_j}+1}}\\[15pt]
        \hspace{-.5cm}(\mbox{by eqns. \eqref{eq:sub1}, \eqref{eq:sub2}})
        = & 
        \displaystyle\frac{\Pl(a,b+n+1,c-1)}{\prod_{j=1}^n (a+v_j)_{\underline{v_j}}}
        \cdot
        \displaystyle\frac{\Pl(a+1,b+n-1,c)}{\prod_{j=1}^{n-1} (a+v_j)_{\underline{v_j}} }
        \cdot
        \frac{(a+c)_n}{(a+c+1)_{n-1}}\\[15pt]
(\mbox{by eqn. \eqref{eq:sub}})\aequiv&
        \displaystyle\frac{\Pl(a,b+n,c)}{\prod_{j=1}^n (a+v_j)_{\underline{v_j}}}
        \cdot \frac{(a+b+n+1)_{c-1}}{(a+c)_{b+n}}
        \\[15pt]
        \times &
        \displaystyle\frac{\Pl(a+1,b+n,c-1)}{\prod_{j=1}^{n-1} (a+v_j)_{\underline{v_j}}}
        \cdot \frac{(a+1+c)_{b+n-1}}{(a+b+n+1)_{c-1}}
        \\[15pt]
        \times & \frac{(a+c)_n}{(a+c+1)_{n-1}}\\[15pt]
        = & 
        \displaystyle\frac{\Pl(a+1,b+n,c-1)}{\prod_{j=1}^{n}(a+v_j)_{\underline{v_j}}}
        \cdot
         \displaystyle\frac{\Pl(a,b+n+1,c)}{\prod_{j=1}^{n-1}(a+v_j)_{\underline{v_j}}},
\end{align*}
which is exactly the product on the third line of equation \eqref{eq:kuobase}. This completes the proof.
\end{proof}

Note that $H_{0,b,c,n,\vec \emptyset, (v_j)_1^n}$ has forced lozenges at its northern tip that when removed leave a region of the form $H_{1,b,c+1-v_1,n-1,\vec \emptyset, (v_j-v_1)_2^n}$, which is also a dented hexagon with fewer dents which are still all along the northwest side. Through repeated application of \thref{CLP}, one can obtain a complete product formula for $\M(H_{a,b,c,n,\vec \emptyset, \vec v})$; in particular, when $b=c=0$, this may be regarded as an independent proof for the number of tilings of a trapezoid with an arbitrary number of dents along the long base, which Cohn, Larsen, and Propp \cite{CLP} attribute to Gelfand and Tsetlin \cite{GT}. Our main theorem can be interpreted as a generalization of that result.

We are now ready to prove \thref{main}. The proof is similar to that of \thref{CLP}, and uses that lemma as a base case.

\begin{proof}[Proof of \thref{main}] 
We shall show by induction that if $H_{a,b,c,t,\vec u, \vec v}$ is tileable then
\begin{equation}\label{eq:eqmain}
\brac{\M(H_{a,b,c,t,\vec u, \vec v})} \aequiv \brac{ f_{b,c, \vec u, \vec v}(a)}:=\frac{\Pl(a,b+n, c+m)}{\prod_{i=1}^m(a+u_i)_{\underline{u_i}}\prod_{j=1}^n(a+v_j)_{\underline{v_j}}}.
\end{equation}
We will induct on the number of dents $m+n$, using $m=0$ and $n=0$ as base cases. In these cases, equation \eqref{eq:eqmain} follows immediately from \thref{CLP}.

For our inductive hypothesis, suppose equation \eqref{eq:eqmain} holds for dented hexagons with fewer than $m+n$ dents. Consider a dented hexagon with dents indexed by $(u_i)_1^m, (v_j)_1^n$. 

In the case where $\underline{u_m}=0$, $H_{a,b,c,t,(u_i)_1^m,(v_j)_1^n}$ has forced lozenges along its southeast side and therefore has the same tiling function as $H_{a,b,c+1,t-1,(u_i)_1^{m-1},(v_j)_1^n}$, so
\begin{align*}
    \brac{\M(H_{a,b,c,t,(u_i)_1^m,(v_j)_1^n})} &\aequiv \brac{\M(H_{a,b,c+1,t-1,(u_i)_1^{m-1},(v_j)_1^n})}\\
    (\mbox{IH}) & \aequiv \brac{f_{b,c+1,(u_i)_1^{m-1},(v_j)_1^n}(a)}\\
    (\mbox{\thref{loptop}c}) & \aequiv \brac{f_{b,c,(u_i)_1^{m},(v_j)_1^n}(a)}.
\end{align*}
Equation \eqref{eq:eqmain} holds when $\underline{v_n}=0$ by symmetric reasoning.

Assume therefore that $m,n>0$, $\underline{u_m}>0$ and $\underline{v_n}>0$.
Regard $H_{a,b,c,t,(u_i)_1^m,(v_j)_1^n}$ as a subregion of the (unbalanced) region $R_a:= H_{a,b,c,t,(u_i)_{1}^{m-1}, (v_j)_{1}^{n-1}}$. Within $R_a$ let $\alpha$ denote the unit triangle indexed by $v_n$, let $\beta$ denote the triangle indexed by $u_m$, let $\gamma$ denote the southmost  triangle along the northeast side of $R_a$, and let $\delta$ denote the southmost  triangle along the northwest side of $R_a$. Since $m,n>0$ and $\underline{u_m}>0$ and $\underline{v_n}>0$ these locations are clearly defined within $R_a$ and are distinct, as depicted in Figure \ref{fig:fig5}.

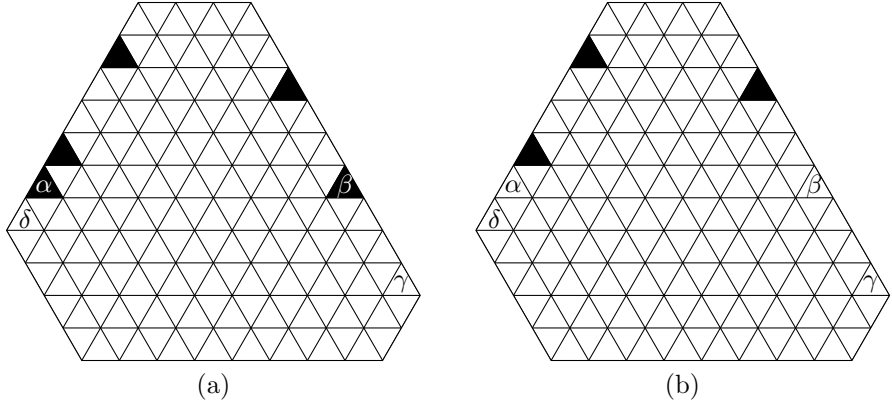
\begin{figure}
\begin{minipage}[c]{\textwidth}
\begin{center}
\begin{multicols}{2}

\begin{tikzpicture}[x={(0:1cm)},y={(120:1cm)},z={(240:1cm)},scale=.5]
    
    \hex{3}{4}{2}{5}
    \dentB{3}{3}
    \dentB{3}{6}
    \dentC{3}{2}
    \dentC{3}{5}
    \dentC{3}{6}
    
    \node (A) at (0,-.33,8.33){\color{white}$\alpha$};
    \node (B) at (0,-8.33,.33){\color{white}$\beta$};
    \node (C) at (0,-11.33,.33){$\gamma$};
    \node (D) at (0,-.33,9.33){$\delta$};
    
\end{tikzpicture}

(a)

\vfill \null \columnbreak

\begin{tikzpicture}[x={(0:1cm)},y={(120:1cm)},z={(240:1cm)},scale=.5]
    
    \hex{3}{4}{2}{5}
    \dentB{3}{3}
    \dentC{3}{2}
    \dentC{3}{5}
    
    \node (A) at (0,-.33,8.33){$\alpha$};
    \node (B) at (0,-8.33,.33){$\beta$};
    \node (C) at (0,-11.33,.33){$\gamma$};
    \node (D) at (0,-.33,9.33){$\delta$};
    
\end{tikzpicture}

(b)
\end{multicols}
\end{center}
\caption{(a) $H_{3,4,2,5,(3,6),(2,5,6)}$; (b) $R_3$ with $\alpha, \beta, \gamma, \delta$ labeled.} 
    \label{fig:fig5}
\end{minipage}
\end{figure}

We will apply Kuo Condensation (\thref{kuo}) to $R_a, \alpha, \beta, \gamma, \delta$. Figure \ref{fig:fig6} depicts each region referenced in the condensation formula, in some cases with forced lozenges shaded (we regard these as not being part of the region). Note they are all families of balanced dented hexagons.

Since $H_{a,b,c,t,\vec u, \vec v}$ is tileable so are each of these regions by \thref{existcor}.
The number of dents is strictly minimal on $R_a - \alpha - \beta$, so we may apply the inductive hypothesis to each of the other regions. Recall lemma 2 states
$$
    M(R-\alpha-\beta)M(R-\beta-\delta) = M(R-\alpha-\delta)M(R-\beta-\gamma) - M(R-\alpha-\gamma)M(R-\beta-\delta).
$$

Removing forced lozenges as depicted in Figure \ref{fig:fig6}, we can rewrite this:
\begin{equation}
    \begin{array}{lr}
         & M\left(H_{a,b,c,t,(u_i)_1^{m},(v_j)_1^{n}}\right)
         M\left(H_{a,b+1,c+1,t-2,(u_i)_1^{m-1},(v_j)_1^{n-1}}\right)\\[10 pt]
         =&
         M\left(H_{a,b+1,c,t-1,(u_i)_1^{m-1},(v_j)_1^{n}}\right)
         M\left(H_{a,b,c+1,t-1,(u_i)_1^{m},(v_j)_1^{n-1}}\right)\\[10 pt]
         -&
         M\left(H_{a,b,c+1,t-1,(u_i)_1^{m-1},(v_j)_1^{n}}\right)
         M\left(H_{a,b+1,c,t-1,(u_i)_1^{m},(v_j)_1^{n-1}}\right)
    \end{array}
\end{equation}
Applying the inductive hypothesis to each region except $H_{a,b,c,t,(u_i)_1^m,(v_j)_1^n}$, this implies
\begin{equation}\label{eq:kuomain}
    \begin{array}{rl}
         &M\left(H_{a,b,c,t,(u_i)_1^{m},(v_j)_1^{n}}\right)
         \displaystyle\frac{\Pl(a,b+n,c+m)}{\prod_{i=1}^{m-1} (a+u_i)_{\underline{u_i}} \prod_{j=1}^{n-1} (a+v_j)_{\underline{v_j}}}\\[15pt]
         \aequiv& 
         \displaystyle\frac{\Pl(a,b+n+1,c+m-1)}{\prod_{i=1}^{m-1} (a+u_i)_{\underline{u_i}+1} \prod_{j=1}^{n} (a+v_j)_{\underline{v_j}-1}}
         \displaystyle\frac{\Pl(a,b+n-1,c+m+1)}{\prod_{i=1}^{m} (a+u_i)_{\underline{u_i}-1} \prod_{j=1}^{n-1} (a+v_j)_{\underline{v_j}+1}}\\[15pt]
         -&
         \displaystyle \frac{\Pl(a,b+n,c+m)}{\prod_{i=1}^{m-1} (a+u_i)_{\underline{u_i}} \prod_{j=1}^{n} (a+v_j)_{\underline{v_j}}}
         \displaystyle\frac{\Pl(a,b+n,c+m)}{\prod_{i=1}^{m} (a+u_i)_{\underline{u_i}} \prod_{j=1}^{n-1} (a+v_j)_{\underline{v_j}}}
    \end{array}
\end{equation}

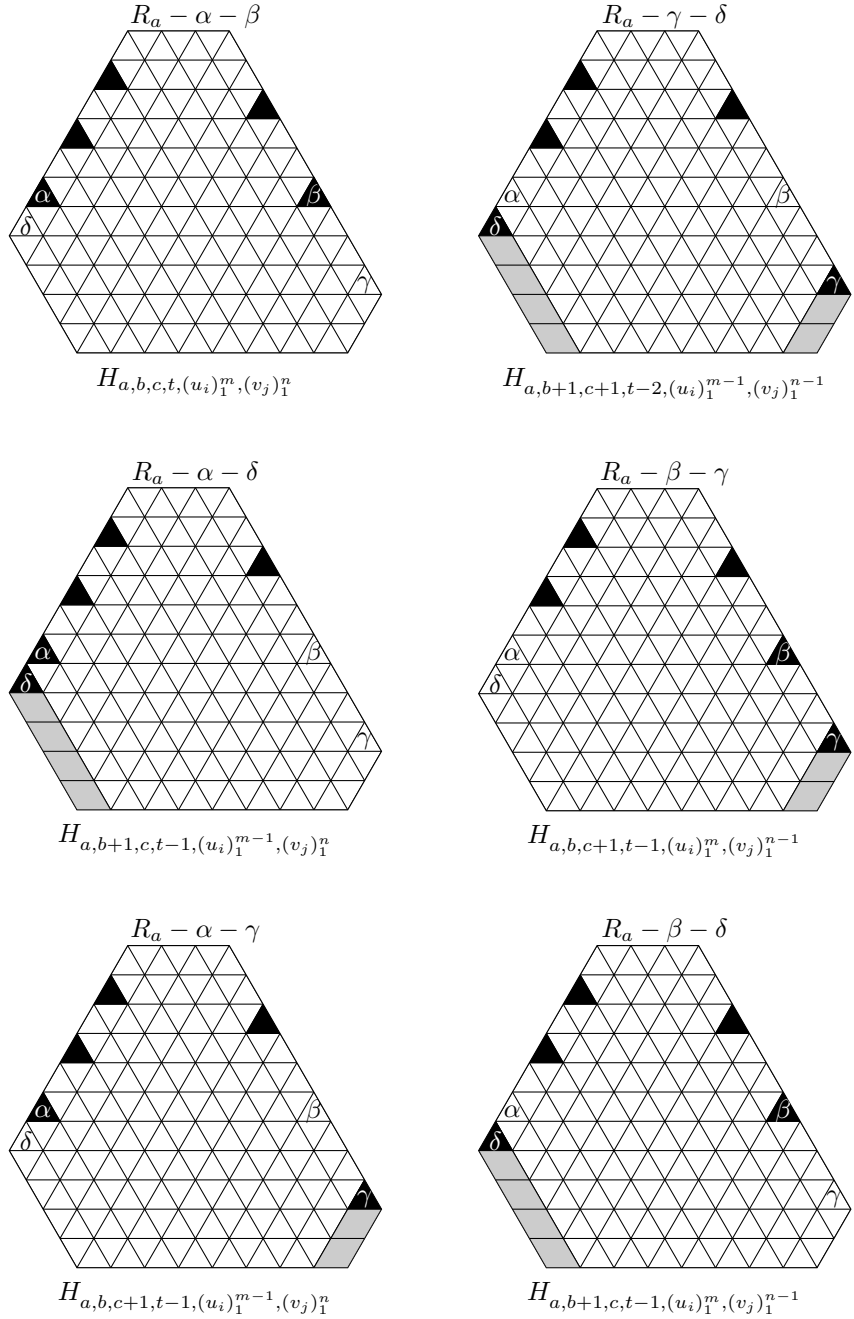
\begin{figure}
\begin{minipage}[c]{\textwidth}
\begin{center}
\begin{multicols}{2}

$R_a - \alpha - \beta$

\begin{tikzpicture}[x={(0:1cm)},y={(120:1cm)},z={(240:1cm)},scale=.45]
    
    \dentC{3}{6}
    \node (A) at (0,-.33,8.33){\color{white}$\alpha$};
    
    \dentB{3}{6}
    \node (B) at (0,-8.33,.33){\color{white}$\beta$};
    
    \node (C) at (0,-11.33,.33){$\gamma$};
    
    \node (D) at (0,-.33,9.33){$\delta$};
    
    \hex{3}{4}{2}{5}
    \dentB{3}{3}
    \dentC{3}{2}
    \dentC{3}{4}
    
\end{tikzpicture}

$H_{a,b,c,t,(u_i)_1^{m},(v_j)_1^{n}}$

\vfill \null \columnbreak

$R_a - \gamma - \delta$

\begin{tikzpicture}[x={(0:1cm)},y={(120:1cm)},z={(240:1cm)},scale=.45]

    \hex{3}{4}{2}{5}
    
    \node (A) at (0,-.33,8.33){$\alpha$};
    
    \node (B) at (0,-8.33,.33){$\beta$};
    
    \draw[fill=black!20] (0,-12,0) -- ++(0,0,2) -- ++(-1,0,0) -- ++(0,0,-3) -- cycle;
    \dentB{3}{9}
    \node (C) at (0,-11.33,.33){\color{white}$\gamma$};
    
    \draw[fill=black!20] (0,0,10) -- ++(0,-4,0) -- ++(1,0,0) -- ++(0,5,0) -- cycle;
    \dentC{3}{7}
    \node (D) at (0,-.33,9.33){\color{white}$\delta$};
    
    \dentB{3}{3}
    \dentC{3}{2}
    \dentC{3}{4}

    \draw (0,-1,10) -- ++(1,0,0);
    \draw (0,-2,10) -- ++(1,0,0);
    \draw (0,-3,10) -- ++(1,0,0);
    
    \draw (0,-12,1) -- ++(-1,0,0);
    
\end{tikzpicture}

$H_{a,b+1,c+1,t-2,(u_i)_1^{m-1},(v_j)_1^{n-1}}$

\end{multicols}
\begin{multicols}{2}

$R_a - \alpha - \delta$

\begin{tikzpicture}[x={(0:1cm)},y={(120:1cm)},z={(240:1cm)},scale=.45]
    \hex{3}{4}{2}{5}
    
    \dentC{3}{6}
    \node (A) at (0,-.33,8.33){\color{white}$\alpha$};
    
    \node (B) at (0,-8.33,.33){$\beta$};
    
    \node (C) at (0,-11.33,.33){$\gamma$};
    
    \draw[fill=black!20] (0,0,10) -- ++(0,-4,0) -- ++(1,0,0) -- ++(0,5,0) -- cycle;
    \dentC{3}{7}
    \node (D) at (0,-.33,9.33){\color{white}$\delta$};
    
    \dentB{3}{3}
    \dentC{3}{2}
    \dentC{3}{4}
    
    \draw (0,-1,10) -- ++(1,0,0);
    \draw (0,-2,10) -- ++(1,0,0);
    \draw (0,-3,10) -- ++(1,0,0);
    
\end{tikzpicture}

$H_{a,b+1,c,t-1,(u_i)_1^{m-1},(v_j)_1^{n}}$

\vfill \null \columnbreak

$R_a - \beta - \gamma$

\begin{tikzpicture}[x={(0:1cm)},y={(120:1cm)},z={(240:1cm)},scale=.45]

    \hex{3}{4}{2}{5}
    
    \node (A) at (0,-.33,8.33){$\alpha$};
    
    \dentB{3}{6}
    \node (B) at (0,-8.33,.33){\color{white}$\beta$};
    
    \draw[fill=black!20] (0,-12,0) -- ++(0,0,2) -- ++(-1,0,0) -- ++(0,0,-3) -- cycle;
    \dentB{3}{9}
    \node (C) at (0,-11.33,.33){\color{white}$\gamma$};
    
    \node (D) at (0,-.33,9.33){$\delta$};
    
    \dentB{3}{3}
    \dentC{3}{2}
    \dentC{3}{4}
    
    \draw (0,-12,1) -- ++(-1,0,0);
    
\end{tikzpicture}

$H_{a,b,c+1,t-1,(u_i)_1^{m},(v_j)_1^{n-1}}$

\end{multicols}
\begin{multicols}{2}

$R_a - \alpha - \gamma$

\begin{tikzpicture}[x={(0:1cm)},y={(120:1cm)},z={(240:1cm)},scale=.45]

    \hex{3}{4}{2}{5}
    
    \dentC{3}{6}
    \node (A) at (0,-.33,8.33){\color{white}$\alpha$};
    
    \node (B) at (0,-8.33,.33){$\beta$};
    
    \draw[fill=black!20] (0,-12,0) -- ++(0,0,2) -- ++(-1,0,0) -- ++(0,0,-3) -- cycle;
    \dentB{3}{9}
    \node (C) at (0,-11.33,.33){\color{white}$\gamma$};
    
    \node (D) at (0,-.33,9.33){$\delta$};
    \dentB{3}{3}
    \dentC{3}{2}
    \dentC{3}{4}
    
    \draw (0,-12,1) -- ++(-1,0,0);
    
\end{tikzpicture}

$H_{a,b,c+1,t-1,(u_i)_1^{m-1},(v_j)_1^{n}}$

\vfill \null \columnbreak

$R_a - \beta - \delta$

\begin{tikzpicture}[x={(0:1cm)},y={(120:1cm)},z={(240:1cm)},scale=.45]

    \hex{3}{4}{2}{5}
    
    \node (A) at (0,-.33,8.33){$\alpha$};
    
    \dentB{3}{6}
    \node (B) at (0,-8.33,.33){\color{white}$\beta$};
    
    \node (C) at (0,-11.33,.33){$\gamma$};
    
    \draw[fill=black!20] (0,0,10) -- ++(0,-4,0) -- ++(1,0,0) -- ++(0,5,0) -- cycle;
    \dentC{3}{7}
    \node (D) at (0,-.33,9.33){\color{white}$\delta$};
    
    \dentB{3}{3}
    \dentC{3}{2}
    \dentC{3}{4}
    
    \draw (0,-1,10) -- ++(1,0,0);
    \draw (0,-2,10) -- ++(1,0,0);
    \draw (0,-3,10) -- ++(1,0,0);
    
\end{tikzpicture}

$H_{a,b+1,c,t-1,(u_i)_1^{m},(v_j)_1^{n-1}}$

\end{multicols}
\end{center}
\caption{$R_a$ with two of $\alpha, \beta, \gamma, \delta$ removed and forced lozenges shaded.} 
    \label{fig:fig6}
\end{minipage}
\end{figure}

We shall show that when the term $M\left(H_{a,b,c,t,(u_i)_1^{m},(v_j)_1^{n}}\right)$ is replaced with $f_{b,c,(u_i)_1^m,(v_j)_1^n}(a)$, the products on each line of \eqref{eq:kuomain} are $\aequiv$-equivalent, so that
$$M(H_{a,b,c,n, (u_i)_1^m, (v_j)_1^n}) \aequiv f_{b,c, (u_i)_1^m, (v_j)_1^n}(a)$$ by relations $\eqref{eq:sum}$ and $\eqref{eq:substitute}$. It is clear that the product on the first line would be equal to the product on the third line when $f_{b,c,\vec \emptyset, (v_j)_1^n}(a)$ is written out explicitly, so it remains to show the products on the last two lines are $\aequiv$-equivalent. We will manipulate the terms on the second line to show this, employing the following analogues to equations \eqref{eq:sub1} and \eqref{eq:sub2}:
\begingroup
\allowdisplaybreaks
\begin{align}
    \prod_{j=1}^k (a+v_j)_{\underline{v_j}-1} &= \prod_{j=1}^k(a+v_j)_{\underline{v_j}} / (a+c+m)_k \label{eq:sub3}\\
    \prod_{i=1}^k (a+u_i)_{\underline{u_i}-1} &= \prod_{i=1}^k(a+u_j)_{\underline{u_i}} / (a+b+n)_k\\
    \prod_{j=1}^k (a+v_j)_{\underline{v_j}+1} &= (a+c+m+1)_k\prod_{j=1}^k(a+v_j)_{\underline{v_j}} \\
    \prod_{i=1}^k (a+u_i)_{\underline{u_i}-1} &= (a+b+n+1)_k\prod_{i=1}^k(a+u_j)_{\underline{u_i}}. \label{eq:sub4}
\end{align}
\endgroup

We can therefore rewrite:
\begin{align*}
    &\displaystyle\frac{\Pl(a,b+n+1,c+m-1)}{\prod_{i=1}^{m-1} (a+u_i)_{\underline{u_i}+1} \prod_{j=1}^{n} (a+v_j)_{\underline{v_j}-1}}
    \cdot
    \frac{\Pl(a,b+n-1,c+m+1)}{\prod_{i=1}^{m} (a+u_i)_{\underline{u_i}-1} \prod_{j=1}^{n-1} (a+v_j)_{\underline{v_j}+1}}\\[10pt]
    \hspace{-1.5cm}(\mbox{by eqns. \eqref{eq:sub3} - \eqref{eq:sub4}}) 
    = & 
    \frac{\Pl(a,b+n+1,c+m-1)}{\prod_{i=1}^{m-1} (a+u_i)_{\underline{u_i}} \prod_{j=1}^{n} (a+v_j)_{\underline{v_j}}} \cdot \frac{(a+c+m)_{n}}{(a+b+n+1)_{m-1}}\\[10pt]
    \times &
    \frac{\Pl(a,b+n-1,c+m+1)}{\prod_{i=1}^{m} (a+u_i)_{\underline{u_i}} \prod_{j=1}^{n-1} (a+v_j)_{\underline{v_j}}}\cdot
    \frac{(a+b+n)_{m}}{(a+c+m+1)_{n-1}}\\[10pt]
    \hspace{-1.5cm}(\mbox{by \thref{sub}})
    \aequiv &
    \frac{\Pl(a,b+n,c+m)}{\prod_{i=1}^{m-1} (a+u_i)_{\underline{u_i}} \prod_{j=1}^{n} (a+v_j)_{\underline{v_j}}} 
    \cdot \frac{(a+b+n+1)_{c+m-1}}{(a+c+m)_{b+n}}
    \cdot \frac{(a+c+m)_{n}}{(a+b+n+1)_{m-1}}\\[10pt]
    \times &
    \frac{\Pl(a,b+n,c+m)}{\prod_{i=1}^{m} (a+u_i)_{\underline{u_i}} \prod_{j=1}^{n-1} (a+v_j)_{\underline{v_j}}}
    \cdot \frac{(a+c+m+1)_{b+n-1}}{(a+b+n)_{c+m}}
    \cdot \frac{(a+b+n)_{m}}{(a+c+m+1)_{n-1}}\\[10pt]
    =& \displaystyle \frac{\Pl(a,b+n,c+m)}{\prod_{i=1}^{m-1} (a+u_i)_{\underline{u_i}} \prod_{j=1}^{n} (a+v_j)_{\underline{v_j}}}
    \cdot
    \displaystyle\frac{\Pl(a,b+n,c+m)}{\prod_{i=1}^{m} (a+u_i)_{\underline{u_i}} \prod_{j=1}^{n-1} (a+v_j)_{\underline{v_j}}},
\end{align*}
    which is exactly the product from third line from \eqref{eq:kuomain}.
    This is the last case in the inductive step, so equation \eqref{eq:eqmain} holds in general.
\end{proof}

\section{Hexagons with Two Large Dents}  \label{corollaries}

When the dents along each side of a dented hexagon are all adjacent, as in $H_{a,b,c,t,(u+i)_{i=1}^{m}, (v+j)_{j=1}^{n}}$, the region has forced lozenges that form large triangular dents. Figure \ref{fig:fig3} depicts this forcing, and the region that results if the forced lozenges and omitted dents are removed entirely.

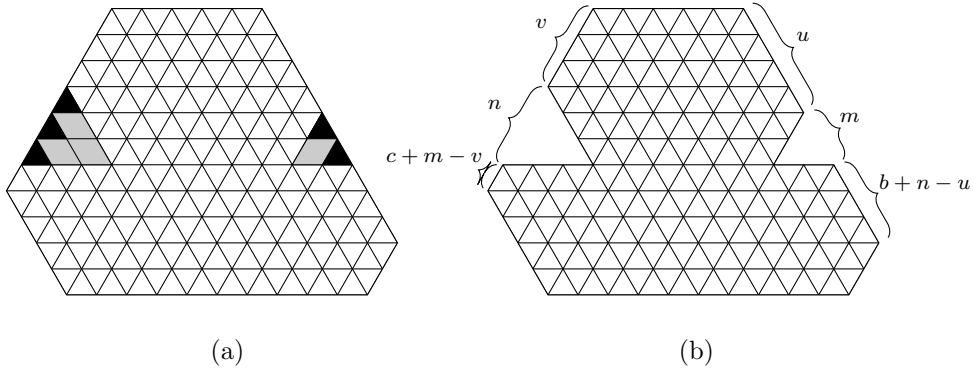
\begin{figure}
\begin{center}

\begin{tikzpicture}[x={(0:1cm)},y={(120:1cm)},z={(240:1cm)},scale=.4]

    \begin{scope}
    \hex{5}{4}{2}{5}
    \draw[fill=black!20] (0,0,8) -- ++(0,-3,0) -- ++(-3,0,0) -- cycle;
    
    \draw[fill=black!20] (0,-9,0) -- ++(0,0,2) --++(2,0,0) -- cycle;
    \dentB{5}{5}
    \dentB{5}{6}
    \dentC{5}{4}
    \dentC{5}{5}
    \dentC{5}{6}
    
    \draw (0,0,10) --++(2,0,0);
    \draw (0,0,9) --++(0,-2,0);
    \end{scope}
    
    \begin{scope}[xshift=16cm]

    \draw [decorate,decoration={brace,amplitude=5pt,raise=4pt}]
    (0,-5,0) -- ++(0,-3.9,0) node [black,midway,yshift=.3cm,xshift=.4cm] {\footnotesize
    $u$};
    
    \draw [decorate,decoration={brace,amplitude=5pt,raise=4pt}]
    (0,-9.1,0) -- ++(0,-1.9,0) node [black,midway,yshift=.3cm,xshift=.4cm] {\footnotesize
    $m$};
    
    \draw [decorate,decoration={brace,amplitude=5pt,raise=4pt}]
    (0,-11.1,0) -- ++(0,-2.9,0) node [black,midway,yshift=.3cm,xshift=.9cm] {\footnotesize
    $b+n-u$};
    
    \draw [decorate,decoration={brace,amplitude=5pt,raise=2pt}]
    (0,0,7.9) -- ++(0,0,-2.9) node [black,midway,yshift=.3cm,xshift=-.4cm] {\footnotesize
    $v$};
    
    \draw [decorate,decoration={brace,amplitude=5pt,raise=2pt}]
    (0,0,11) -- ++(0,0,-2.9) node [black,midway,yshift=.3cm,xshift=-.4cm] {\footnotesize
    $n$};
    
    \draw [decorate,decoration={brace,amplitude=5pt,raise=2pt}]
    (0,0,12) -- ++(0,0,-.9) node [black,midway,yshift=.3cm,xshift=-.8cm] {\footnotesize
    $c+m-v$};

\hexTwoDent{5}{4}{2}{4}{2}{3}{3}
    \end{scope}
    
\end{tikzpicture}

\begin{multicols}{2}

(a)

(b)
\end{multicols}
\end{center}
\caption{(a) $H_{5,4,2,5,(4+i)_1^2,(3+j)_1^3}$; (b) the region with forced lozenges removed} 
    \label{fig:fig3}
\end{figure}

The original goal of this paper was finding a general tiling function for hexagons with two large dents, and indeed \thref{main} simplifies to a ratio of tilings of semiregular hexagons when the dents along each side are all adjacent:

\begin{corollary}\thlabel{twodents}
     A dented hexagon $H_{a,b,c,t,(u+i)_{i=1}^{m}, (v+j)_{j=1}^{n}}$ has tilings exactly if $u\geq n$ or $v\geq m$, in which case
    \begin{align*}\hspace{-.5cm}\M\left(H_{a,b,c,t,(u+i)_{i=1}^{m}, (v+j)_{j=1}^{n}}\right) =& \M\left(H_{0,b,c,t,(u+i)_{i=1}^{m}, (v+j)_{j=1}^{n}}\right)\\
    \times & \frac{\Pl(a,b+n,c+m)\Pl(u,b+n-u,m)\Pl(v,c+m-v,n)}{\Pl(a+u,b+n-u,m)\Pl(a+v,c+m-v,n)}.
    \end{align*}
\end{corollary}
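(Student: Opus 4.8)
The plan is to obtain this corollary as a direct specialization of \thref{main} to adjacent dents, combined with a tileability computation from \thref{existence}. The single observation that makes everything collapse is that for adjacent dents the auxiliary parameters become index-independent: substituting $u_i = u+i$ and $v_j = v+j$ gives $\underline{u_i} = b+n+i-(u+i) = b+n-u$ and $\underline{v_j} = c+m+j-(v+j) = c+m-v$, each a single constant rather than a vector of exponents. (For the region to be well-defined one needs $u+m \le b+t$ and $v+n\le c+t$, i.e. $b+n-u\ge 0$ and $c+m-v\ge 0$, so the $\Pl$-arguments below are legitimate.) I will treat the tileability claim and the product formula separately.

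For tileability I would apply \thref{existence}. The $k$th boundary triangle counted from the north along a long side sits in the $k$th horizontal row below the northern side, so a dent indexed by $u_i$ (resp.\ $v_j$) is north of $L_N$ exactly when $u_i\le N$ (resp.\ $v_j\le N$); in particular $\mu_N$ does not depend on $a$. For adjacent dents this gives $\mu_N = \max(0,\min(m,N-u)) + \max(0,\min(n,N-v))$. Setting $g(N) := \mu_N - N$, one checks that the increment $g(N)-g(N-1)$ equals $+1$ precisely on the overlap range $\max(u,v) < N \le \min(u+m,v+n)$ and is $\le 0$ elsewhere, so $g$ is maximized at $N^* := \min(u+m,v+n)$. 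Evaluating there yields $\max_N g(N) = m-v$ when $u+m\le v+n$ and $\max_N g(N) = n-u$ when $v+n\le u+m$. Demanding $\max_N g(N)\le 0$ thus gives $v\ge m$ in the first case and $u\ge n$ in the second, and these combine to the single criterion ``$u\ge n$ or $v\ge m$''; the two cases are mutually consistent because, for instance, $u+m\le v+n$ together with $u\ge n$ already forces $v\ge m$.

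For the formula, assuming tileability, I would invoke \thref{main} and simplify the products. Writing $w := b+n-u$, the definition $\Pl(A,w,m)=\prod_{k=1}^m (A+k)_w/(k)_w$ gives $\prod_{i=1}^m (x+u+i)_w = \Pl(x+u,w,m)\,\prod_{k=1}^m (k)_w$, and since the index-independent factor $\prod_{k=1}^m (k)_w$ is identical for $x=0$ and $x=a$, it cancels in the ratio:
\[
\frac{\prod_{i=1}^m (u_i)_{\underline{u_i}}}{\prod_{i=1}^m (a+u_i)_{\underline{u_i}}} = \frac{\Pl(u,b+n-u,m)}{\Pl(a+u,b+n-u,m)},
\]
with the completely analogous identity for the $v_j$ (using $c+m-v$). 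Substituting both into the expression of \thref{main}, the factors $\prod_i(u_i)_{\underline{u_i}}\prod_j(v_j)_{\underline{v_j}}$ in the numerator recombine with the denominator $\prod_i(a+u_i)_{\underline{u_i}}\prod_j(a+v_j)_{\underline{v_j}}$ into exactly the two $\Pl$-ratios, leaving the $\Pl(a,b+n,c+m)$ factor untouched and producing the stated formula.

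The formula manipulation is routine once the constancy of $\underline{u_i},\underline{v_j}$ is noted, so the only real work lies in the tileability analysis. The main obstacle there will be justifying the row-to-index identification cleanly and confirming that the piecewise-linear $g(N)$ is genuinely maximized at $N^*$, so that the two regimes merge without gap or overlap into the crisp condition ``$u\ge n$ or $v\ge m$.''
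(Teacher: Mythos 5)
Your proposal is correct and follows essentially the same route as the paper: tileability is settled through \thref{existence} by locating the maximum of $\mu_N - N$ at $N=\min(u+m,v+n)$ (the paper phrases this as two cases according to which dent block ends further north), and the product formula is the direct specialization of \thref{main} using the constancy of $\underline{u_i}=b+n-u$ and $\underline{v_j}=c+m-v$. The only difference is that you spell out the Pochhammer-to-$\Pl$ recombination that the paper dismisses as immediate.
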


This generalizes a specific case of a result by Lai who studied the problem when the southern borders of the dents are level (see Theorem 3.1 from \cite{La17}, with $q=1$).
We give an expression of that result below in the language of this paper; it follows from \thref{twodents}.

\begin{corollary}\thlabel{TLcorr}
    Given a dented hexagon $H_{a,b,c,t,(u+i)_{i=1}^{m}, (v+j)_{j=1}^{n}}$ with $u+m=v+n$, let $D:=u-n$. If $D<0$ the region has no tilings. Otherwise,
    \begin{align*}
        \M\left(H_{a,b,c,t,(u+i)_{i=1}^{m}, (v+j)_{j=1}^{n}}\right) =& \frac{\Pl(a,b+n,c+m)\Pl(u,b-D,m)\Pl(v,c-D,n)}{\Pl(a+u,b-D,m)\Pl(a+v,c-D,n)}\\
        \times & \frac{\Pl(c-D,n+m,b)\Pl(D,n,m)}{\Pl(c-D+n,m,D)} \cdot \Pl(D,m,b-D).
    \end{align*}
\end{corollary}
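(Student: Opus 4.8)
The plan is to derive the statement from \thref{twodents} by specializing to the level case $u+m=v+n$ and then separately evaluating the degenerate-top tiling function $\M(H_{0,b,c,t,\vec u,\vec v})$ that \thref{twodents} leaves behind. First I would record the arithmetic consequences of the hypothesis. Setting $D:=u-n$, the level condition forces $D=u-n=v-m$, so the two slack quantities in \thref{twodents} collapse to $b+n-u=b-D$ and $c+m-v=c-D$. The tileability criterion of \thref{twodents}, namely $u\ge n$ or $v\ge m$, is then literally the single condition $D\ge 0$; for $D<0$ both inequalities fail and the region is untileable, which settles the first assertion.

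After this substitution the ratio factor in \thref{twodents} becomes exactly the factor $\frac{\Pl(a,b+n,c+m)\Pl(u,b-D,m)\Pl(v,c-D,n)}{\Pl(a+u,b-D,m)\Pl(a+v,c-D,n)}$ displayed in the target formula. Since \thref{twodents} writes $\M(H_{a,\dots})$ as this factor times $\M(H_{0,\dots})$, equating with the claim shows that the entire statement reduces to the single identity
\begin{equation*}
\M\!\left(H_{0,b,c,t,(u+i)_1^m,(v+j)_1^n}\right)=\frac{\Pl(c-D,n+m,b)\,\Pl(D,n,m)}{\Pl(c-D+n,m,D)}\cdot \Pl(D,m,b-D).
\end{equation*}
All remaining work is the evaluation of this one boundary term.

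The heart of the proof is therefore computing $\M(H_{0,b,c,t,(u+i)_1^m,(v+j)_1^n})$. Because $a=0$ the northern apex is a single wedge, and exactly as in the remark following \thref{CLP} the triangles there are forced down to the first dent on each side; removing these forced lozenges leaves a region with the same tiling count but a \emph{positive} base parameter. My plan is to identify that region, after forced-lozenge removal, with a hexagon carrying two adjacent triangular dents whose base parameter equals $D$ and whose dent-bottoms are again level, and then to invoke \thref{twodents} a second time. The base case of this second application is $D=0$: there the two level dents, of total size $n+m$, together with the degenerate apex carve out an honest semiregular hexagon, whose count is $\Pl(c,n+m,b)$ by MacMahon's formula \eqref{eq:MacMahon}. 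This matches the displayed identity at $D=0$, since then $\Pl(D,n,m)=\Pl(D,m,b-D)=1$ and $\Pl(c-D+n,m,D)=1$; indeed each of the three $D$-dependent factors is designed to degenerate to $1$ when $D=0$, so the denominator $\Pl(c-D+n,m,D)$ is precisely the ratio produced by the second invocation of \thref{twodents}.

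The step I expect to be the main obstacle is this geometric identification: verifying that the $a=0$, level-dent region really is congruent, up to forced lozenges (equivalently, up to a lattice rotation that moves the degenerate side off the top), to a two-adjacent-dent hexagon of base $D$ to which \thref{twodents} applies, and pinning down the induced short sides and dent positions so that the output is the one claimed. Once that identification is secured, the remainder is bookkeeping: expanding every $\Pl$ through its definition \eqref{eq:MacMahon}, clearing the Pochhammer ratios with \thref{sub} and the elementary telescoping identities already used in the proofs of \thref{CLP} and \thref{main}, and collecting terms so that $\Pl(c-D,n+m,b)$ comes from the base hexagon, $\Pl(D,n,m)$ and $\Pl(D,m,b-D)$ from the two dent-blocks, and the single denominator $\Pl(c-D+n,m,D)$ from the second application of \thref{twodents}.
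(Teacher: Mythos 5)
Your first step is exactly the paper's: specializing \thref{twodents} to the level case $u+m=v+n$ turns the slacks into $b+n-u=b-D$ and $c+m-v=c-D$, collapses the tileability condition to $D\ge 0$, and reduces the whole claim to the single evaluation
\begin{equation*}
\M\bigl(H_{0,b,c,t,(u+i)_1^m,(v+j)_1^n}\bigr)=\frac{\Pl(c-D,n+m,b)\,\Pl(D,n,m)}{\Pl(c-D+n,m,D)}\cdot\Pl(D,m,b-D).
\end{equation*}
That reduction is correct and matches the paper.

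The gap is in the step you yourself flag as the main obstacle, and your guess for it is wrong. After removing the forced lozenges of $H_{0,b,c,t,(u+i)_1^m,(v+j)_1^n}$ (a $u\times v$ parallelogram at the degenerate apex into which both level dents feed), the surviving region is \emph{not} a two-adjacent-dent hexagon of base $D$: it is congruent to $H_{c-D,\,n+m,\,b-D,\,D,\,(n+i)_{i=1}^{D},\,\vec \emptyset}$, a hexagon of base parameter $c-D$ carrying a \emph{single} block of $D$ consecutive dents on one side (Figure \ref{fig:fig12}). Your factor-accounting betrays the mismatch: applying \thref{twodents} to a genuine two-dent hexagon of base $D$ would produce a leading numerator $\Pl(D,\cdot,\cdot)$ rather than $\Pl(c-D,n+m,b)$, and \emph{two} denominator factors of the form $\Pl(D+u',\cdot,\cdot)\Pl(D+v',\cdot,\cdot)$, whereas the target has only the single denominator $\Pl(c-D+n,m,D)$ --- precisely the shape of a one-dent application, where the empty dent vector makes its numerator and denominator factors trivial. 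With the correct identification the rest goes through as you outline: \thref{twodents} applied to $H_{c-D,n+m,b-D,D,(n+i)_1^D,\vec\emptyset}$ contributes $\Pl(c-D,n+m,b)\Pl(n,m,D)/\Pl(c-D+n,m,D)$ (and $\Pl(n,m,D)=\Pl(D,n,m)$ by the symmetry of MacMahon's formula), while the residual base-$0$ region collapses under further forcing to the semiregular hexagon $H_{D,m,b-D}$, giving $\Pl(D,m,b-D)$. Your $D=0$ consistency check is fine but does not substitute for establishing the identification at general $D$.
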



\begin{proof}[Proof of \thref{twodents}]
It is straightforward to check for arbitrary values that 
\begin{equation}\label{eq:eqtwodent}
\begin{array}{rl}
    m > v, n > u \iff &  v < u+ m \leq v + n, \mbox{ and} \quad m+ (u+ m-v) > u+m \\
    \mbox{ OR}&  u < v+ n \leq u+ m,\mbox{ and}  \quad n + (v+ n-u) > v+n.
\end{array}
\end{equation}
We shall show the second set of expressions hold exactly when the region $H_{a,b,c,t,(u+i)_1^m,(v+j)_1^n}$ has no tilings.

Suppose that $u+m \leq v+n$ (meaning the southern border of the eastern dent is weakly north of the southern border of the western dent) and the region has no tilings.
Then $(\mu_N - N)$ is maximized at $N=u+m$: so $\mu_{u+m}>u+m$. 
If $v>u+m$ then $\mu_{u+m}=m\leq u+m$, giving a contradiction. So it must be that $v\leq u+m$. This implies $\mu_{u+m}=m+(u+m-v) > u+m$, so that the first line on the left side of equation \eqref{eq:eqtwodent} holds. The argument works in reverse: if $v<u+m\leq v+n$ and $(u+m)-v>u$ then $\mu_{u+m} > u+m$; so these three inequalities are equivalent to the eastern dent being weakly north of the western dent and the region having no tilings.

The inequalities $u < v+ n \leq u+ m$ and $n + (v+ n-u) > v+n$ are equivalent to the western dent being weakly north of the eastern dent and the region having no tilings.

The formula given follows immediately from \thref{main}.
\end{proof}

\begin{proof}[Proof of \thref{TLcorr}]
    The region $H_{0,b,c,t,(u+i)_{i=1}^{m}, (v+j)_{j=1}^{n}}$ has forced lozenges that when removed give a region congruent to $H_{c-D,n+m,b-D,D,(n+i)_{i=1}^D, \vec \emptyset}$, as depicted in figure \ref{fig:fig12}.
    
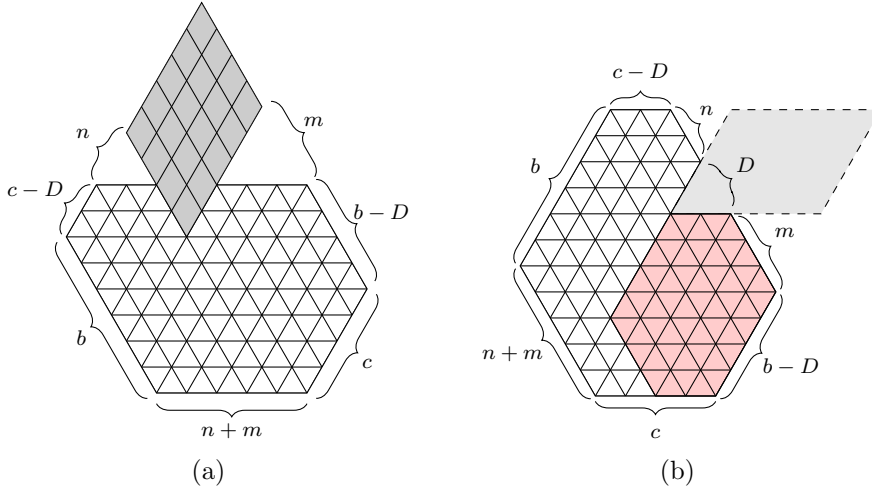
\begin{figure}
\begin{center}
\begin{multicols}{2}

\begin{tikzpicture}[x={(0:1cm)},y={(120:1cm)},z={(240:1cm)},scale=.4]
    
    \draw [decorate,decoration={brace,amplitude=5pt,raise=2pt}]
    (0,0,6.9) -- ++(0,0,-2) node [black,midway,yshift=.3cm,xshift=-.4cm] {\footnotesize
    $n$};
    
    \draw [decorate,decoration={brace,amplitude=5pt,raise=2pt}]
    (0,0,9) -- ++(0,0,-1.9) node [black,midway,yshift=.3cm,xshift=-.6cm] {\footnotesize
    $c-D$};
    
    \draw [decorate,decoration={brace,amplitude=5pt,raise=4pt}]
    (0,-4,0) -- ++(0,-2.9,0) node [black,midway,yshift=.3cm,xshift=.4cm] {\footnotesize
    $m$};
    
    \draw [decorate,decoration={brace,amplitude=5pt,raise=4pt}]
    (0,-7,0) -- ++(0,-3.9,0) node [black,midway,yshift=.3cm,xshift=.6cm] {\footnotesize
    $b-D$};
    
    \draw [decorate,decoration={brace,amplitude=5pt,raise=4pt}]
    (0,-11,0) -- ++(0,0,4) node [black,midway,yshift=-.3cm,xshift=.4cm] {\footnotesize
    $c$};
    
    \draw [decorate,decoration={brace,amplitude=5pt,raise=4pt}]
    (0,-11,4) -- ++(-5,0,0) node [black,midway,yshift=-.5cm] {\footnotesize
    $n+m$};
    
    \draw [decorate,decoration={brace,amplitude=5pt,raise=4pt}]
    (-5,-11,4) -- ++(0,6,0) node [black,midway,yshift=-.3cm,xshift=-.4cm] {\footnotesize
    $b$};

\hexTwoDent{0}{6}{4}{4}{3}{5}{2}

\draw[fill=black!20] (0,0,0) -- ++(0,0,5) -- ++(0,-4,0,) -- ++(0,0,-5) -- cycle;

\draw (0,0,1) -- ++(0,-4,0);
\draw (0,0,2) -- ++(0,-4,0);
\draw (0,0,3) -- ++(0,-4,0);
\draw (0,0,4) -- ++(0,-4,0);

\draw (0,-1,0) -- ++(0,0,5);
\draw (0,-2,0) -- ++(0,0,5);
\draw (0,-3,0) -- ++(0,0,5);
    
\end{tikzpicture}

(a)

\vfill \null \columnbreak

\null
\vfill

\begin{tikzpicture}[x={(0:1cm)},y={(120:1cm)},z={(240:1cm)},scale=.4]

\draw[dashed, fill=black!10] (4,0,6) -- ++(5,0,0) -- ++(0,0,-4) -- ++(-5,0,0) -- cycle;

\draw[ fill=red!20] (4,0,6) -- ++(2,0,0) -- ++(0,-3,0) -- ++(0,0,4) -- ++(-2,0,0) -- ++(0,3,0) -- cycle;

    \draw [decorate,decoration={brace,amplitude=5pt,raise=2pt}]
    (0,0,2) -- ++(2,0,0) node [black,midway,yshift=.5cm] {\footnotesize
    $c-D$};
    
    \draw [decorate,decoration={brace,amplitude=5pt,raise=2pt}]
    (0,-2,0) -- ++(0,-1.8,0) node [black,midway,yshift=.3cm,xshift=.3cm] {\footnotesize
    $n$};
    
    \draw [decorate,decoration={brace,amplitude=5pt,raise=2pt}]
    (0,-4.2,0) -- ++(0,-1.6,0) node [black,midway,yshift=.3cm,xshift=.4cm] {\footnotesize
    $D$};
    
    \draw [decorate,decoration={brace,amplitude=5pt,raise=2pt}]
    (0,-6.2,0) -- ++(0,-2.8,0) node [black,midway,yshift=.3cm,xshift=.4cm] {\footnotesize
    $m$};
    
    \draw [decorate,decoration={brace,amplitude=5pt,raise=2pt}]
    (0,-9,0) -- ++(0,0,4) node [black,midway,yshift=-.3cm,xshift=.6cm] {\footnotesize
    $b-D$};
    
    \draw [decorate,decoration={brace,amplitude=5pt,raise=2pt}]
    (0,-9,4) -- ++(-4,0,0) node [black,midway,yshift=-.5cm,xshift=.0cm] {\footnotesize
    $c$};
    
    \draw [decorate,decoration={brace,amplitude=5pt,raise=2pt}]
    (-4,-9,4) -- ++(0,5,0) node [black,midway,yshift=-.3cm,xshift=-.6cm] {\footnotesize
    $n+m$};
    
    \draw [decorate,decoration={brace,amplitude=5pt,raise=2pt}]
    (0,0,8) -- ++(0,0,-6) node [black,midway,yshift=.3cm,xshift=-.4cm] {\footnotesize
    $b$};

\hexTwoDent{2}{5}{4}{2}{2}{2}{0}
    
\end{tikzpicture}

(b)

\null \columnbreak
\end{multicols}
\end{center}
\caption{(a) $H_{0,6,4,5,(4+i)_1^3, (5+j)_1^2}$ with forced lozenges; (b) the region interpreted as a hexagon with one dent ($H_{D,m,b-D}$ shaded red).
} 
    \label{fig:fig12}
\end{figure}

    The region $H_{0,n+m,b-D,D,(n+i)_{i=1}^D, \vec \emptyset}$ also has forced lozenges, that when removed give a semiregular hexagon $H_{D,m,b-D}$.
    The result then follows by applying \thref{twodents} to both $H_{a,b,c,t,(u+i)_{i=1}^{m}, (v+j)_{j=1}^{n}}$ and $H_{c-D,n+m,b-D,D,(n+i)_{i=1}^D, \vec \emptyset}$.
\end{proof}

\section{Final Remarks}

The method of proof used for \thref{TLcorr} is applicable whenever the tiling function of $H_{0,b,c,t,\vec u, \vec v}$ is simple to express. For example, let $H=H_{0,b,c,t,(u+i)_1^m, (v+j)_1^n}$ be a region with $\underline{v_n}=1$, as depicted in Figure \ref{fig:fig10} (a). 

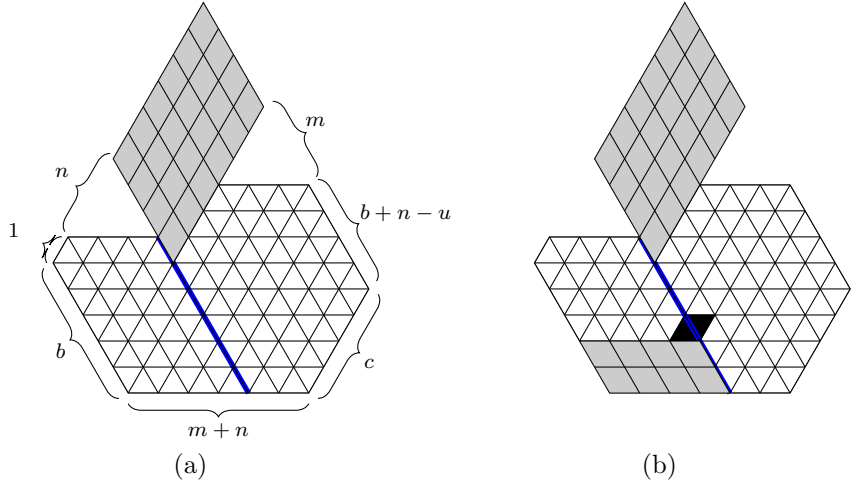
\begin{figure}
\begin{center}

\begin{tikzpicture}[x={(0:1cm)},y={(120:1cm)},z={(240:1cm)},scale=.4]

    \begin{scope}
    \draw[line width=2pt, blue](0,-3,6)-- ++(0,-6,0);
    
    \draw [decorate,decoration={brace,amplitude=5pt,raise=2pt}]
    (0,0,8.9) -- ++(0,0,-3) node [black,midway,yshift=.3cm,xshift=-.4cm] {\footnotesize
    $n$};
    
    \draw [decorate,decoration={brace,amplitude=5pt,raise=2pt}]
    (0,0,10.1) -- ++(0,0,-1) node [black,midway,yshift=.3cm,xshift=-.6cm] {\footnotesize
    $1$};
    
    \draw [decorate,decoration={brace,amplitude=5pt,raise=4pt}]
    (0,-4,0) -- ++(0,-2.9,0) node [black,midway,yshift=.3cm,xshift=.4cm] {\footnotesize
    $m$};
    
    \draw [decorate,decoration={brace,amplitude=5pt,raise=4pt}]
    (0,-7,0) -- ++(0,-3.9,0) node [black,midway,yshift=.3cm,xshift=.9cm] {\footnotesize
    $b+n-u$};
    
    \draw [decorate,decoration={brace,amplitude=5pt,raise=4pt}]
    (0,-11,0) -- ++(0,0,4) node [black,midway,yshift=-.3cm,xshift=.4cm] {\footnotesize
    $c$};
    
    \draw [decorate,decoration={brace,amplitude=5pt,raise=4pt}]
    (0,-11,4) -- ++(-6,0,0) node [black,midway,yshift=-.5cm] {\footnotesize
    $m+n$};
    
    \draw [decorate,decoration={brace,amplitude=5pt,raise=4pt}]
    (-6,-11,4) -- ++(0,5,0) node [black,midway,yshift=-.3cm,xshift=-.4cm] {\footnotesize
    $b$};

    \hexTwoDent{0}{5}{4}{4}{3}{6}{3}
    
    \draw[fill=black!20] (0,0,0) -- ++(0,0,6) -- ++(0,-4,0,) -- ++(0,0,-6) -- cycle;

    \draw (0,0,1) -- ++(0,-4,0);
    \draw (0,0,2) -- ++(0,-4,0);
    \draw (0,0,3) -- ++(0,-4,0);
    \draw (0,0,4) -- ++(0,-4,0);
    \draw (0,0,5) -- ++(0,-4,0);
    
    \draw (0,-1,0) -- ++(0,0,6);
    \draw (0,-2,0) -- ++(0,0,6);
    \draw (0,-3,0) -- ++(0,0,6);
    \end{scope}
    
    \begin{scope}[xshift=16cm]
    
    \draw[fill=black] (0,-7,6) -- ++(0,0,-1) -- ++(-1,0,0) -- ++(0,0,1) -- cycle;
    
    \draw[line width=2pt, blue](0,-3,6)-- ++(0,-6,0);

    \hexTwoDent{0}{5}{4}{4}{3}{6}{3}
    
    \draw[fill=black!20] (0,0,0) -- ++(0,0,6) -- ++(0,-4,0,) -- ++(0,0,-6) -- cycle;

    \draw (0,0,1) -- ++(0,-4,0);
    \draw (0,0,2) -- ++(0,-4,0);
    \draw (0,0,3) -- ++(0,-4,0);
    \draw (0,0,4) -- ++(0,-4,0);
    \draw (0,0,5) -- ++(0,-4,0);
    
    \draw (0,-1,0) -- ++(0,0,6);
    \draw (0,-2,0) -- ++(0,0,6);
    \draw (0,-3,0) -- ++(0,0,6);
    
    \draw[fill=black!20] (0,-7,6) -- ++(0,-2,0) -- ++(-4,0,0) -- ++(0,2,0) -- cycle;
    
    \draw (1,-3,10) --++(0,-2,0);
    \draw (2,-3,10) --++(0,-2,0);
    \draw (3,-3,10) --++(0,-2,0);
    
    \draw (0,-4,10) --++(4,0,0);
    \end{scope}
    
\end{tikzpicture}

\vspace{-.5cm}

\begin{multicols}{2}
(a)

(b)
\end{multicols}
\end{center}
\caption{(a) $H_{0,b,c, \vec u, \vec v}$ with $\underline{v_n}=1$ and a split-line; (b) $R_3$ with forced lozenges shaded grey.
} 
    \label{fig:fig10}
\end{figure}

Observe that the blue split-line in the figure partitions the region into two unbalanced hexagons $H_{n,b,0,1}$ and $H_{m-1,b+n-u,c-1,1}$.
It can be seen by modifying the proof of the Region Splitting Lemma that if $R$ is a balanced region with a partition into regions $P$, $Q$, so that unit triangles in $P$ which are adjacent to unit triangles in $Q$ are all of the same orientation, and this orientation \emph{is in excess} within $P$ by some amount, say $d$, then all tilings of $R$ include exactly $d$ lozenges covering one  triangle from $P$ and one  triangle from $Q$.

It follows that all tilings of the entire region must include exactly one lozenge which crosses the split-line.
Let $S_i$ be the tilings of the region so that the split-line-crossing lozenge's northern border is $i$ units north of $H$'s southern side. Let $R_i$ be the region obtained by removing that lozenge from $H$, and observe that $S_i$ has a natural bijection with the tilings of $R_i$. Furthermore, \thref{splitting} applies to each region $R_i$ with respect to the blue split-line, and partitions $R_i$ into two regions with known tiling functions:
\begingroup
\allowdisplaybreaks
\begin{align*}
    \M(R_i) &= \M(H_{1,n,b+1-i})\M(H_{m-1,b+n-u,c-1,1,(i),\vec \emptyset})\\[5pt]
    &= \frac{\Pl(m-1,b+n-u,c)(b+n-u)!}{n!(c-1)!(b+m+n-u-1)!}\\
    & \times (b+2-i)_n (b+n+2-u-i)_{c-1}(i)_{m-1}\\[5pt]
    \M(H_{a,b,c,t,(u+i)_1^m, (v+j)_1^n}) &= \frac{\Pl(a,b+n,c+m)\Pl(u,b+n-u,m)(v+n)!(a+v)!}{\Pl(a+u,b+n-u,m)\Pl(a+v,1,n)(v)!(a+v+n)!} \\
    &\times \sum_{i=1}^{b+n+1-u}\M(R_i)
\end{align*}
\endgroup

A similar calculation {could} be made when $1 \leq \underline{v_n} \leq m$, indexing over the positions of $|\underline{v_n}|$ distinct lozenges which cross the split-line. 

We can apply this method to a different family of dented hexagons, with $v$ arbitrary, $u<b+1$, and $n=1$, employing a split-line which cuts southwest from the eastern dent, as depicted in Figure \ref{fig:fig11}.

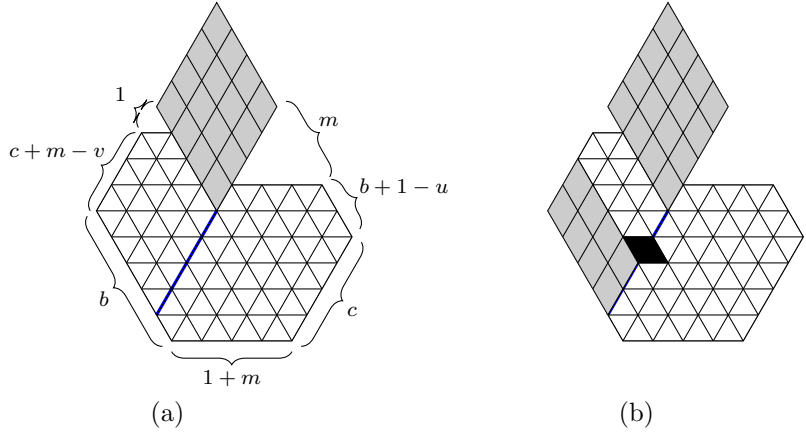
\begin{figure}
\begin{center}

\begin{tikzpicture}[x={(0:1cm)},y={(120:1cm)},z={(240:1cm)},scale=.4]

    \begin{scope}
    
    \draw[very thick, blue](0,-4,4)-- ++(0,0,4);
    
    \draw [decorate,decoration={brace,amplitude=5pt,raise=2pt}]
    (0,0,4.9) -- ++(0,0,-1) node [black,midway,yshift=.3cm,xshift=-.4cm] {\footnotesize
    $1$};
    
    \draw [decorate,decoration={brace,amplitude=5pt,raise=2pt}]
    (0,0,8.1) -- ++(0,0,-3) node [black,midway,yshift=.3cm,xshift=-.8cm] {\footnotesize
    $c+m-v$};
    
    \draw [decorate,decoration={brace,amplitude=5pt,raise=4pt}]
    (0,-4,0) -- ++(0,-2.9,0) node [black,midway,yshift=.3cm,xshift=.4cm] {\footnotesize
    $m$};
    
    \draw [decorate,decoration={brace,amplitude=5pt,raise=4pt}]
    (0,-7,0) -- ++(0,-1.9,0) node [black,midway,yshift=.3cm,xshift=.9cm] {\footnotesize
    $b+1-u$};
    
    \draw [decorate,decoration={brace,amplitude=5pt,raise=4pt}]
    (0,-9,0) -- ++(0,0,4) node [black,midway,yshift=-.3cm,xshift=.4cm] {\footnotesize
    $c$};
    
    \draw [decorate,decoration={brace,amplitude=5pt,raise=4pt}]
    (0,-9,4) -- ++(-4,0,0) node [black,midway,yshift=-.5cm] {\footnotesize
    $1+m$};
    
    \draw [decorate,decoration={brace,amplitude=5pt,raise=4pt}]
    (-4,-9,4) -- ++(0,5,0) node [black,midway,yshift=-.3cm,xshift=-.4cm] {\footnotesize
    $b$};

    \hexTwoDent{0}{5}{4}{4}{3}{4}{1}
    
    \draw[fill=black!20] (0,0,0) -- ++(0,0,4) -- ++(0,-4,0,) -- ++(0,0,-4) -- cycle;
    
    \draw (0,0,1) -- ++(0,-4,0);
    \draw (0,0,2) -- ++(0,-4,0);
    \draw (0,0,3) -- ++(0,-4,0);
    
    \draw (0,-1,0) -- ++(0,0,4);
    \draw (0,-2,0) -- ++(0,0,4);
    \draw (0,-3,0) -- ++(0,0,4);
    \end{scope}
    
    \begin{scope}[xshift=15cm]
    
    \draw[very thick, blue](0,-4,4)-- ++(0,0,4);
    
    \draw[fill=black] (4,0,10) -- ++(1,0,0) -- ++(0,1,0) -- ++ (-1,0,0) -- cycle;

    \hexTwoDent{0}{5}{4}{4}{3}{4}{1}
    
    \draw[fill=black!20] (0,0,0) -- ++(0,0,4) -- ++(0,-4,0,) -- ++(0,0,-4) -- cycle;
    
    \draw (0,0,1) -- ++(0,-4,0);
    \draw (0,0,2) -- ++(0,-4,0);
    \draw (0,0,3) -- ++(0,-4,0);
    
    \draw (0,-1,0) -- ++(0,0,4);
    \draw (0,-2,0) -- ++(0,0,4);
    \draw (0,-3,0) -- ++(0,0,4);

    \draw[fill=black!20] (0,0,6) -- ++(0,0,2) -- ++(0,-4,0,) -- ++(0,0,-2) -- cycle;
    
    \draw (0,-1,6) -- ++(0,0,2);
    \draw (0,-2,6) -- ++(0,0,2);
    \draw (0,-3,6) -- ++(0,0,2);
    
    \draw (0,0,7) -- ++(0,-4,0);    \end{scope}
    
\end{tikzpicture}

\vspace{-.5cm}

\begin{multicols}{2}

(a)

(b)
\end{multicols}
\end{center}

\caption{(a) $H_{0,b,c, m+1,\vec u, \vec v}$ with $n=1$ and a split-line; (b) $R_3$ with forced lozenges shaded grey.
} 
    \label{fig:fig11}
\end{figure}
Again, each tiling of this region has exactly one lozenge which crosses the split-line. If $R_i$ is obtained by removing the lozenge in the $i$th position from the southwest end of the split-line, then tilings of $H_{0,b,c,m+1,(u+i)_1^m, (v+1)}$ are in bijection with the union of the tilings of $\{R_i\}_{i=1}^{c+m-v+1}$:
\begin{align*}
    \M(R_i) &= \M(H_{1,u-1,c+m+1-v-i}) \M(H_{b-u,c,m,1,(i),\vec \emptyset})\\
    &=\frac{\Pl(b-u,c,m+1)c!}{(u-1)!(b-u+c)!m!}\\
    & \times(c+m+2-v-i)_{u-1}(c-i+2)_{m}(i)_{b-u}\\[5pt]
    H_{a,b,c,m+1,(u+i)_1^m, (v+1)} &= \frac{\Pl(a,b+1,c+m)\Pl(u,b+1-u,m)v!(c+m+a)!c!}{\Pl(a+u,b+1-u,m)(c+m)!(a+v)!}\\
    & \times \sum_{i=1}^{c+m-v+1} \M(R_i).
\end{align*}

We note some dead ends. We have shown that the tiling function for the dented hexagon $H_{a,b,c,t,\vec u, \vec v}$ may be given as a polynomial of {entirely linear factors} (of $a$ when the other parameters are fixed). The same is not true for obvious other parameters of the region, such as $c$ or $b$. 
Similarly when dents were placed along more than two sides of the hexagon, or along \emph{short} sides of the hexagon, the tiling function of the region could not be interpreted as a polynomial of linear factors over any obvious single parameter of the region.

\medskip

\textbf{Acknowledgments.} The author would like to thank M. Ciucu for introducing me to this subject, for his insights into this problem, and for his feedback on this paper.

\begin{bibdiv}
\begin{biblist}

\bib{By19}{article}{
title={Identities involving Schur functions and their applications to a shuffling theorem},
author={Byun, S. H.},
status={submitted},
eprint={arXiv:1906.04533 [math.CO]},
date={2019}
}

\bib{CF16}{article}{
title={Lozenge Tilings of Hexagons with Arbitrary Dents},
author={Ciucu, M.},
author={Fischer, I.},
journal={Adv. Appl. Math.},
date={2016},
Volume={73 C},
pages={1-22}
}

\bib{Ci97}{article}{
title={Enumeration of Perfect Matchings in Graphs with Reflective Symmetry},
author={Ciucu, M.},
journal={J. Combin. Theory Ser. A},
date={1997},
Volume={77},
number={1},
pages={67-97}
}

\bib{Ci01}{article}{
title={Enumeration of Lozenge Tilings of Hexagons with a Central Triangular Hole},
author={Ciucu, M.},
author={Eisenk{\"o}bl, T.},
author={Krattenthaler, C.},
author={Zare, D.},
journal={J. Combin. Theory Ser. A},
date={2001},
Volume={95},
pages={ 251-334}
}

\bib{Ci19a}{article}{
title={Tilings of hexagons with a removed triad of bowties},
author={Ciucu, M.},
author={Lai, T.},
author={Rohatgi, R.},
eprint={arXiv:1909.04070 [math.CO]},
date={2020}
}

\bib{Ci19b}{article}{
title={Lozenge tilings of doubly-intruded hexagons},
author={Ciucu, M.},
author={Lai, T.},
date={2019},
journal={J. Combin. Theory Ser. A},
date={2019},
Volume={167},
pages={294-339}
}

\bib{CLP}{article}{
title={The shape of a typical boxed plane partition},
journal={New York J. Math.},
volume={4},
author={H. Cohn},
author={M. Larsen},
author={J. Propp},
date={1998},
pages={137-165}
}

\bib{DT89}{article}{
title={The Problem of the Calissons},
author={David, G.},
author={Tomei, C.},
journal={Amer. Math. Monthly},
date={1989},
Volume={96 (935)},
pages={429-431}
}

\bib{Ge89}{article}{
title={Binomial determinants, paths, and hook length formulae. },
author={Gessel, I.},
author={Viennot, X.},
journal={Adv. in Math.},
date={1985},
volume={58},
number={3},
pages={300-321}
}

\bib{Ku04}{article}{
title={Applications of Graphical Condensation for Enumerating Matchings and Tilings},
author={Kuo, E.},
journal={Theoret. Comput. Sci.},
date={2004},
Volume={319},
number={1-3},
pages={29-57}}

\bib{La14}{article}{
title={Enumeration of Hybrid Domino-Lozenge Tilings},
author={Lai, T.},
journal={J. Combin. Theory Ser. A},
date={2014},
volume={122},
pages={53–81}}

\bib{La17}{article}{
title={A q-enumeration of lozenge tilings of a hexagon with three dents},
author={Lai, T.},
journal={Adv. in Appl. Math.},
date={2017},
volume={82},
pages={23-57}}

\bib{La19a}{article}{
title={A Shuffling Theorem for Reflectively Symmetric Tilings},
author={Lai, T.},
status={submitted},
eprint={arXiv:1905.09268 [math.CO]},
date={2019}
}

\bib{La19b}{article}{
title={A Shuffling Theorem for Centrally Symmetric Tilings},
author={Lai, T.},
status={submitted},
eprint={arXiv:1906.03759 [math.CO]},
date={2019}
}

\bib{La19c}{article}{
title={A shuffling theorem for lozenge tilings of doubly-dented hexagons},
author={Lai, T.},
author={Rohatgi, R.},
status={submitted},
eprint={arXiv:1905.08311 [math.CO]},
date={2019}
}

\bib{Li73}{article}{
title={On  the  vector  representations  of  induced  matroids},
author={Lindstr{\"o}m, B.},
journal={Bull. Lond. Math. Soc.},
volume={5},
date={1973},
pages={85-90}
}

\bib{Ma}{book}{
title={Combinatory Analysis},
volume={2},
author={MacMahon, P. A.},
publisher={Cambridge University Press},
date={1916},
pages={12},
reprint={
title={Combinatory Analysis},
volume={1-2},
author={MacMahon, P. A.},
publisher={Chelsea, New York},
date={1960}
}
}


\bib{GT}{article}
{
title={Finite-dimensional  representations  of  the  group  of unimodular matrices},
language={Russian},
journal={Dokl. Akad. Nauk.},
volume={71},
date={1950},
pages={825-828},
author={M. Gelfand},
author={M. L. Tsetlin},
translation={
language={English},
title={Izrail M. Gelfand:  Collected Papers},
publisher={Springer-Verlag, Berlin},
volume={2},
date={1988},
pages={653–656}
}
}
\end{biblist}
\end{bibdiv}

\end{document}